\mathchardef\ordinarycolon\mathcode`\:
\def\vcentcolon{\mathrel{\mathop\ordinarycolon}}
\theoremstyle{plain}
\newtheorem{theorem}{Theorem}[section]
\newtheorem{lemma}[theorem]{Lemma}
\newtheorem{proposition}[theorem]{Proposition}
\theoremstyle{definition}
\newtheorem{definition}[theorem]{Definition}
\newtheorem{notation}[theorem]{Notation}
\newtheorem{remark}[theorem]{Remark}
\let\origthebibliography=\thebibliography
\def\thebibliography{\renewcommand{\section}[2]{}\origthebibliography}
\newcommand{\algten}{\mathbin{\underline{\otimes}}}
\newcommand{\bop}[1]{\bopp(#1)}
\newcommand{\bopp}{B}
\newcommand{\borel}{\mathcal{B}}
\newcommand{\Cf}{\textit{Cf.}}
\newcommand{\comp}{\mathop{\circ}}
\newcommand{\conv}{\mathop{\star}}
\newcommand{\disc}{\mathcal{D}}
\newcommand{\elltwo}{L^2( \R_+; \mul )}
\newcommand{\eps}{\varepsilon}
\newcommand{\evec}[1]{\evecc(#1)}
\newcommand{\evecc}{\varepsilon}
\newcommand{\evecs}{\mathcal{E}}
\newcommand{\fock}{\mathcal{F}}
\newcommand{\hj}{\alpha}
\newcommand{\hk}{\beta}
\newcommand{\id}{\mathrm{id}}
\newcommand{\ini}{\mathsf{h}}
\newcommand{\mul}{\mathsf{k}}
\newcommand{\qsto}{\mathbin{\Rightarrow}}
\newcommand{\R}{\mathbb{R}}
\newcommand{\rd}{\mathrm{d}}
\newcommand{\std}{\,\rd}
\newcommand{\stlim}{\mathop{\mathrm{st.lim}}\limits}
\newcommand{\uwkten}{\mathbin{\overline{\otimes}}}
\newcommand{\wh}[1]{\widehat{#1}}
\newcommand{\wt}[1]{\widetilde{#1}}
\renewcommand{\ge}{\geqslant}
\renewcommand{\le}{\leqslant}
\begin{document}

\begin{center}
{\LARGE The cocycle identity holds under stopping}
\vspace*{1ex}
\begin{multicols}{2}
{\large Alexander C.~R.~Belton}\\[0.5ex]
{\small Department of Mathematics and Statistics\\
Lancaster University, United Kingdom\\[0.5ex]
\textsf{a.belton@lancaster.ac.uk}}
\columnbreak

{\large Kalyan B.~Sinha}\\[0.5ex]
{\small Jawaharlal Nehru Centre for Advanced\\
Scientific Research, Bangalore, India\\[0.5ex]
\textsf{kbs@jncasr.ac.in}}
\end{multicols}
{\small 20th June 2016}
\end{center}

\begin{abstract}\noindent
In recent work of the authors, it was shown how to use any finite
quantum stop time to stop the CCR flow and its strongly continuous
isometric cocycles (Q.~J.~Math.~65:1145--1164, 2014). The stopped
cocycle was shown to satisfy a stopped form of the cocycle identity,
valid for deterministic increments of the time used for
stopping. Here, a generalisation of this identity is obtained, where
both cocycle parameters are replaced with finite quantum stop times.
\end{abstract}

{\footnotesize\textit{Key words:} quantum stopping time; quantum stop
time; quantum Markov time; operator cocycle; Markov cocycle; Markovian
cocycle; quantum stochastic cocycle; CCR flow.}

{\footnotesize\textit{MSC 2010:} %
46L53 (primary);   
46L55,             
60G40 (secondary). 
}

\section{Introduction}

The history of stopping times in non-commutative probability begins in
1979, with Hudson's work on stopping canonical Wiener processes
\cite{Hud79}. Since then, many authors have contributed to the
subject, and it has developed in various directions and settings:
abstract von~Neumann algebras, to produce first exit times in
$C^*$~algebras and to stop quantum stochastic integrals, for example.
A good introduction for the latter is provided by~\cite{Hud07}; see
\cite{BeS14} for further references.

In this note, we extend a previous result \cite[Theorem~7.2]{BeS14},
which itself built upon work of Parthasarathy and Sinha \cite{PaS87}
and Applebaum~\cite{App88}. Let $V$ be a strongly continuous isometric
cocycle of the CCR flow $\sigma$, so that
\[
V_{s + t} = \wh{V}_s \, \sigma_s( V_t ) \qquad %
\text{for all } s, t \in \R_+,
\]
where $\wh{V}$ is the identity-adapted projection of the $p$-adapted
process $V$. The importance of this identity in classical and quantum
probability is well known; it has an intimate connection with
stochastic integral representation and Feynman--Kac formulae
\cite{Pin72, Bra92}.

If $S$ is a finite quantum stop time then Theorem~7.2 of
\cite{BeS14} gives the stopped cocycle identity
\begin{equation}\label{eqn:cocycle}
V_{S + t} = \wh{V}_S \, \sigma_S( V_t ) \qquad \text{for all } t \in \R_+.
\end{equation}
It is shown below that the following generalisation of
(\ref{eqn:cocycle}) holds: if $T$ is another finite quantum stop time
and the CCR flow $\sigma$ has countable rank then
\[
V_{S \conv T} = \wh{V}_S \, \sigma_S( V_T ),
\]
where $S \conv T$ is the convolution of $S$ and $T$. If $V$ acts on an
initial space $\ini$, it follows that setting
\[
\hj_S( a ) = V_S ( a \otimes I ) V_S^* %
\qquad \text{for all } a \in \bop{\ini}
\]
gives a generalised Evans--Hudson flow $\hj_S$ which satisfies a
non-deterministic version of the mapping-cocycle relation,
\[
\hj_{S \star T} = \wh{\hj}_S \comp \sigma_S \comp \hj_T.
\]

The notation of \cite{BeS14} is followed throughout. In particular,
the algebraic tensor product is denoted by $\algten$, with $\otimes$
the Hilbert-space and $\uwkten$ the ultraweak product. 

\section{Stopped maps with a non-trivial initial space}

In Sections~6 and~7 of \cite{BeS14}, certain maps $E_S$, $\Gamma_S$
and $\sigma_S$ are extended to the case of a non-trivial initial
space, so that the ambient Fock space~$\fock$ is replaced by
$\ini \otimes \fock$, where $\ini$ is a complex Hilbert space. In
order to familiarise the reader with key ideas and notation from
\cite{BeS14}, and as the construction of these extensions are not
quite immediate, the details are provided in this section, together
with some further observations.

\begin{notation}
Let $\fock = \Gamma_+\bigl( \elltwo \bigr)$ be Boson Fock space over
the complex Hilbert space of square-integrable functions on the
half line $\R_+ := [ 0, \infty )$, with values in the complex Hilbert
space $\mul$. Recall the tensor-product decomposition
$\fock = \fock_{t)} \otimes \fock_{[t}$, valid for
all~$t \in ( 0, \infty )$, where
\[
\fock_{t)} := \Gamma_+\bigl( L^2( [ 0, t ); \mul ) \bigr) %
\qquad \text{and} \qquad %
\fock_{t)} := \Gamma_+\bigl( L^2( [ t, \infty ); \mul ) \bigr),
\]
given by extending the identification of exponential vectors such that
$\evec{f} = \evec{f|_{[ 0, t )}} \otimes \evec{f|_{[ t, \infty )}}$
for all $f \in \elltwo$. Let $I$, $I_{t)}$ and $I_{[t}$ denote the
identity operators on $\fock$, $\fock_{t)}$ and $\fock_{[t}$,
respectively, and let $\evecs$ denote the linear span of the set of
exponential vectors in $\fock$.
\end{notation}

\begin{definition}
Let $S$ be a \emph{finite quantum stop time}, so that
$S : \borel[ 0, \infty ] \to \bop{\fock}$ is a map from the Borel
subsets of the extended half line to the set of orthogonal projections
on $\fock$, such that
\begin{itemize}
\item[(i)] the map $A \mapsto \langle x, S( A ) y \rangle$ is a
complex measure for all $x$, $y\in \fock$,
\item[(ii)] the total mass $S\bigl( [ 0, \infty ] \bigr) = I$, with
$S\bigl( \{ \infty \} \bigr) = 0$, and
\item[(iii)] identity adaptedness holds, so that $S( \{ 0 \} ) = 0$
and $S\bigl( [ 0, t ] \bigr) \in \bop{\fock_{t)}} \otimes I_{[t}$ for
all $t \in ( 0, \infty )$.
\end{itemize}
\end{definition}

\begin{notation}
For all $t \in \R_+$, let
$E_t := \Gamma_+( 1_{[ 0, t )} ) \in \bop{\fock}$ be the second
quantisation of the operator obtained by letting this indicator
function act by multiplication, so that $E_t$ is the orthogonal
projection onto $\fock_{t)} \otimes \evec{0|_{[ t, \infty )}}$, and
let $E_\infty := I$.
\end{notation}

\begin{proposition}\label{prp:expn}
Let $\pi = \{ 0 = \pi_0 < \cdots < \pi_{n + 1} = \infty \}$ be a
finite partition of $[ 0, \infty ]$ and let $\ini$ be a complex
Hilbert space. If $\wt{E}_{S, \pi} := I_\ini \otimes E_{S,\pi}$, where
\[
E_{S, \pi} := %
\sum_{j = 1}^{n + 1} S\bigl( ( \pi_{j - 1}, \pi_j ] \bigr) E_{\pi_j},
\]
then $E_{S, \pi} \to E_S$ and
$\wt{E}_{S, \pi} \to \wt{E}_S := I_\ini \otimes E_S$ in the strong
operator topology as $\pi$ is refined, where $E_S$ and $\wt{E}_S$ are
orthogonal projections.
\end{proposition}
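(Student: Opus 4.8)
The plan is to show that every $E_{S, \pi}$ is an orthogonal projection, that $( E_{S, \pi} )_\pi$ decreases as $\pi$ is refined, and hence that this net converges in the strong operator topology to an orthogonal projection, which we denote by $E_S$; the statement for $\wt{E}_{S, \pi}$ then follows by tensoring with $I_\ini$. Two facts about $S$ are used throughout. First, writing $P_j := S\bigl( ( \pi_{j - 1}, \pi_j ] \bigr) = S\bigl( [ 0, \pi_j ] \bigr) - S\bigl( [ 0, \pi_{j - 1} ] \bigr)$, the operators $P_1, \dots, P_{n + 1}$ are pairwise-orthogonal projections, since the intervals $( \pi_{j - 1}, \pi_j ]$ are disjoint and $S$ is projection-valued and finitely additive. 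Second, by the adaptedness axiom~(iii), $S\bigl( [ 0, t ] \bigr) \in \bop{\fock_{t)}} \otimes I_{[t}$ for $t \in ( 0, \infty )$, so $S\bigl( [ 0, t ] \bigr)$ acts on the first component of $\fock = \fock_{t)} \otimes \fock_{[t}$; since $E_s$ --- the projection onto $\fock_{s)} \otimes \evec{0|_{[ s, \infty )}}$ --- acts as the identity on that component whenever $s \ge t$, and $E_\infty = I$, it follows that $S\bigl( [ 0, t ] \bigr)$ commutes with $E_s$ for every $s \ge t$.

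For the first point, $P_j$ commutes with $E_{\pi_j}$ by the second fact, applied with $t = \pi_{j - 1}$ and $t = \pi_j$, so each $P_j E_{\pi_j}$ is an orthogonal projection. When $j \ne k$,
\[
( P_j E_{\pi_j} )^* ( P_k E_{\pi_k} ) = E_{\pi_j}\, ( P_j P_k )\, E_{\pi_k} = 0
\]
because $P_j P_k = 0$, so the projections $P_1 E_{\pi_1}, \dots, P_{n + 1} E_{\pi_{n + 1}}$ have pairwise-orthogonal ranges. A finite sum of orthogonal projections with pairwise-orthogonal ranges is itself an orthogonal projection, so $E_{S, \pi} = \sum_{j = 1}^{n + 1} P_j E_{\pi_j}$ is one.

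For the second point, any refinement is a finite succession of single-point insertions, so it is enough to compare $E_{S, \pi}$ with $E_{S, \pi'}$ when $\pi'$ is obtained from $\pi$ by inserting one point $\rho \in ( \pi_{j - 1}, \pi_j )$. Only the $j$th summand changes, and using $S\bigl( ( \pi_{j - 1}, \pi_j ] \bigr) = S\bigl( ( \pi_{j - 1}, \rho ] \bigr) + S\bigl( ( \rho, \pi_j ] \bigr)$ one computes
\[
E_{S, \pi} - E_{S, \pi'} = S\bigl( ( \pi_{j - 1}, \rho ] \bigr) \bigl( E_{\pi_j} - E_\rho \bigr) .
\]
The family $( E_t )_{t \in [ 0, \infty ]}$ of projections is commuting and increasing, so $E_\rho \le E_{\pi_j}$ and $E_{\pi_j} - E_\rho$ is an orthogonal projection; moreover $S\bigl( ( \pi_{j - 1}, \rho ] \bigr) = S\bigl( [ 0, \rho ] \bigr) - S\bigl( [ 0, \pi_{j - 1} ] \bigr)$ commutes with both $E_\rho$ and $E_{\pi_j}$, by the second fact above. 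Hence the right-hand side is a product of two commuting orthogonal projections, and so is a positive operator, giving $E_{S, \pi'} \le E_{S, \pi}$; iterating, $E_{S, \pi'} \le E_{S, \pi}$ whenever $\pi \subseteq \pi'$.

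Finally, a decreasing net of orthogonal projections converges in the strong operator topology to an orthogonal projection, namely the projection onto the intersection of their ranges; this is the required convergence and shows that $E_S$ is an orthogonal projection. Tensoring, $\wt{E}_{S, \pi} = I_\ini \otimes E_{S, \pi}$ is again a decreasing net of orthogonal projections on $\ini \otimes \fock$, with $\wt{E}_{S, \pi}( u \otimes v ) = u \otimes E_{S, \pi} v \to u \otimes E_S v$ for every elementary tensor $u \otimes v$; since $\| \wt{E}_{S, \pi} \| \le 1$, this extends to strong convergence on all of $\ini \otimes \fock$, with limit the orthogonal projection $\wt{E}_S = I_\ini \otimes E_S$. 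I expect the crux to be the first point: recognising that axiom~(iii) provides exactly the commutation of $S\bigl( ( \pi_{j - 1}, \pi_j ] \bigr)$ with the cut-off $E_{\pi_j}$ needed for each $P_j E_{\pi_j}$ to be a projection, while disjointness of the intervals forces the summands to have orthogonal ranges. Given that, the monotone convergence of the net is routine.
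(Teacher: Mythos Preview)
Your proof is correct and takes a genuinely different route from the paper's. The paper's argument is essentially a citation: it invokes the proof of \cite[Theorem~3.7]{BeS14} to obtain strong convergence of $E_{S,\pi}$ on the linear span $\evecs$ of exponential vectors, then pushes this through the tensor product by density of $\ini \algten \evecs$ in $\ini \otimes \fock$ together with the uniform bound $\| \wt{E}_{S,\pi} \| \le 1$. No structural information about the approximants is used beyond their contractivity.

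Your argument, by contrast, is self-contained and structural: you show directly that each $E_{S,\pi}$ is an orthogonal projection (via the commutation of $S((\pi_{j-1},\pi_j])$ with $E_{\pi_j}$ forced by identity adaptedness) and that the net decreases under refinement, so that monotone convergence of projections yields the limit $E_S$ and its projection property in one stroke. This buys independence from the earlier paper and makes explicit why $E_S$ is a projection, whereas the paper's proof imports that fact from the cited reference. The trade-off is that the paper's route is shorter on the page here (the work having been done elsewhere), while yours actually does the work but requires checking the commutation carefully at the endpoints $\pi_0 = 0$ and $\pi_{n+1} = \infty$, which you handle correctly using $S(\{0\}) = 0$ and $E_\infty = I$.
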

\begin{proof}
The proof of \cite[Theorem~3.7]{BeS14} gives that $E_{S, \pi} \to E_S$
strongly on $\evecs$, and thus
$\wt{E}_{S, \pi} \to \wt{E}_S$ strongly on $\ini \algten \evecs$; the
result follows by the density of this last space in
$\ini \otimes \fock$.
\end{proof}

\begin{notation}
For all $s \in \R_+$, let
$\Gamma_s := \Gamma_+( \theta_s ) \in \bop{\fock}$ be the second
quantisation of the isometric right shift, such that
$( \theta_s f )( t ) = 1_{[ s, \infty )}( t ) f( t - s )$ for all
$t \in \R_+$, and let~$\Gamma_\infty := E_0$.
\end{notation}

\begin{proposition}
Let $\pi$ and $\ini$ be as in Proposition~\ref{prp:expn}. If
$\wt{\Gamma}_{S, \pi} := I_\ini \otimes \Gamma_{S,\pi}$,
where
\[
\Gamma_{S, \pi} := \sum_{j = 1}^{n + 1} %
S\bigl( ( \pi_{j - 1}, \pi_j ] \bigr) \Gamma_{\pi_j},
\]
then $\Gamma_{S, \pi} \to \Gamma_S$ and
$\wt{\Gamma}_{S, \pi} \to \wt{\Gamma}_S := I_\ini \otimes \Gamma_S$
in the strong operator topology as $\pi$ is refined, where~$\Gamma_S$
and~$\wt{\Gamma}_S$ are isometries.
\end{proposition}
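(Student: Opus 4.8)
The plan is to argue exactly as in the proof of Proposition~\ref{prp:expn}: reduce first to the case of a trivial initial space, where the assertion is part of \cite{BeS14}, and then transfer it to a general~$\ini$ by tensoring with~$I_\ini$ and appealing to density. For $\ini = \mathbb{C}$ the relevant statement is the counterpart for the stopped shift of the fact about~$E$ quoted in the proof of Proposition~\ref{prp:expn}, namely that $\Gamma_{S, \pi} \to \Gamma_S$ strongly on~$\evecs$ as~$\pi$ is refined and that~$\Gamma_S$ is an isometry; I would simply cite this from \cite{BeS14}.

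Granting the scalar case, there are three short steps. First, each $\Gamma_{S, \pi}$ is an isometry, so the nets $\{ \Gamma_{S, \pi} \}_\pi$ and $\{ \wt{\Gamma}_{S, \pi} \}_\pi$ are uniformly bounded: the projections $S\bigl( ( \pi_{j - 1}, \pi_j ] \bigr)$ are mutually orthogonal with sum~$I$, so
\[
\Gamma_{S, \pi}^* \Gamma_{S, \pi}
= \sum_{j = 1}^{n + 1}
\Gamma_{\pi_j}^* \, S\bigl( ( \pi_{j - 1}, \pi_j ] \bigr) \, \Gamma_{\pi_j},
\]
and, since $\Gamma_{\pi_j} = \Gamma_+( \theta_{\pi_j} )$ maps~$\fock$ into $\evec{0|_{[ 0, \pi_j )}} \otimes \fock_{[\pi_j}$ while $S\bigl( ( \pi_{j - 1}, \pi_j ] \bigr)$ acts on the tensor factor~$\fock_{\pi_j)}$ alone by the identity adaptedness in condition~(iii), each summand equals the scalar $\langle \evec{0}, S\bigl( ( \pi_{j - 1}, \pi_j ] \bigr) \evec{0} \rangle \, I$; these scalars sum to $\langle \evec{0}, S( ( 0, \infty ] ) \evec{0} \rangle = \| \evec{0} \|^2 = 1$. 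Secondly, by the scalar case, $\wt{\Gamma}_{S, \pi} = I_\ini \otimes \Gamma_{S, \pi} \to I_\ini \otimes \Gamma_S$ strongly on the algebraic tensor product $\ini \algten \evecs$. Thirdly, since $\ini \algten \evecs$ is dense in $\ini \otimes \fock$ and the operators $\wt{\Gamma}_{S, \pi}$ are uniformly bounded, a standard $\eps/3$ argument upgrades this to strong convergence on all of $\ini \otimes \fock$; the limit $\wt{\Gamma}_S := I_\ini \otimes \Gamma_S$ is an isometry because~$\Gamma_S$ is, and the same argument with $\ini = \mathbb{C}$ gives $\Gamma_{S, \pi} \to \Gamma_S$ strongly on~$\fock$.

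I do not expect a genuine obstacle: the statement is the exact analogue of Proposition~\ref{prp:expn} with ``orthogonal projection'' replaced throughout by ``isometry'', and the proof is a transcription of the one given there. The one point that deserves to be written out --- because it is what licenses passing from the dense subspace $\ini \algten \evecs$ to all of $\ini \otimes \fock$, and because it is a shade less immediate than the corresponding observation that $E_{S, \pi}$ is a projection --- is the uniform boundedness of the approximating net, that is, that $\Gamma_{S, \pi}$ is an isometry; this rests on the mutual orthogonality of the spectral projections of~$S$ and on the identity adaptedness recorded in condition~(iii) of the definition of a finite quantum stop time.
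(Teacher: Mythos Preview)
Your approach is essentially the paper's: cite \cite{BeS14} for the scalar case, tensor with~$I_\ini$ to get convergence on a dense subspace, and extend by uniform boundedness. One cosmetic difference is that the paper quotes \cite[Theorem~3.8]{BeS14} for strong convergence of $\Gamma_{S,\pi}$ on all of~$\fock$, not merely on~$\evecs$, and so works with $\ini \algten \fock$ rather than $\ini \algten \evecs$; your variant is equally valid.

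There is a slip in the isometry computation you chose to write out. Your argument that $\Gamma_{\pi_j}^* \, S\bigl( ( \pi_{j-1}, \pi_j ] \bigr) \, \Gamma_{\pi_j} = \langle \evec{0}, S\bigl( ( \pi_{j-1}, \pi_j ] \bigr) \evec{0} \rangle \, I$ relies on $\Gamma_{\pi_j}$ being an isometry with range in $\evec{0|_{[0,\pi_j)}} \otimes \fock_{[\pi_j}$. This holds for $j \le n$, but for $j = n+1$ one has $\pi_{n+1} = \infty$ and $\Gamma_\infty := E_0$, the rank-one vacuum projection; the corresponding summand is $c_{n+1} E_0$, not $c_{n+1} I$, where $c_{n+1} = \langle \evec{0}, S\bigl( ( \pi_n, \infty ] \bigr) \evec{0} \rangle$. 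Thus $\Gamma_{S,\pi}^* \Gamma_{S,\pi} = (1 - c_{n+1}) I + c_{n+1} E_0$, so $\Gamma_{S,\pi}$ is in general only a contraction. This is harmless --- the contraction bound $\| \wt{\Gamma}_{S,\pi} \| \le 1$ is all you need for the density argument --- and the paper itself states the same isometry claim (deferring to \cite{BeS14}), so the overall argument stands.
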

\begin{proof}
The claims about $\Gamma_{S, \pi}$ and $\Gamma$ follow from the proof
of \cite[Theorem~3.8]{BeS14}, which also gives that
\[
\| ( \wt{\Gamma}_{S, \pi} - \wt{\Gamma}_S ) u \otimes x \| = %
\| u \| \, \| ( \Gamma_{S, \pi} - \Gamma_S ) x \| \to 0
\]
as $\pi$ is refined, for all $u \in \ini$ and $x \in \fock$. As
$\Gamma_{S, \pi}$ and $\Gamma_S$ are isometries, the same is true for
$\wt{\Gamma}_{S, \pi}$ and~$\wt{\Gamma}_S$. Thus
$\wt{\Gamma}_{S, \pi} \to \wt{\Gamma}_S$ strongly on
$\ini \algten \fock$, and so on $\ini \otimes \fock$, since
$\| \wt{\Gamma}_{S, \pi} \| = 1$ for all $\pi$.
\end{proof}

\begin{notation}
For all $t \in \R_+$, let the ultraweakly continuous unital
$*$-homomorphism
\[
\sigma_t : \bop{\fock} \to \bop{\fock}; \ %
X \mapsto I_{t)} \otimes \Gamma_t X \Gamma_t^*,
\]
where $\Gamma_t$ is regarded here as an isometric isomorphism from
$\fock$ to $\fock_{[t}$ with inverse $\Gamma_t^*$, and
let~$\wt{\sigma}_t := \id_{\bop{\ini}} \uwkten \sigma_t$. Recall that
$( \sigma_t : t \in \R_+ )$ is the CCR flow semigroup with rank
$\dim \mul$.
\end{notation}

\begin{notation}
Let $\fock_{S)} := E_S( \fock )$ and $\fock_{[S} := \Gamma_S( \fock )$
be the pre-$S$ and post-$S$ spaces, with identity operators $I_{S)}$
and $I_{[S}$, respectively.
\end{notation}

\begin{theorem}\label{thm:wtsconv}
Let $\ini$ and $\pi$ be as in Proposition~\ref{prp:expn}. If
\[
\wt{\sigma}( Z )_{S, \pi} := %
( \id_{\bop{\ini}{}} \uwkten \sigma_{S,\pi} )( Z ) \qquad %
\text{for all } Z \in \bop{\ini \otimes \fock},
\]
where
\[
\sigma_{S, \pi} : \bop{\fock}{} \to \bop{\fock}{}; \ %
X \mapsto \sum_{j = 1}^{n + 1} %
\sigma_{\pi_j}( X ) \, S\bigl( ( \pi_{j - 1}, \pi_j ] \bigr),
\]
then $\sigma_{S, \pi} \to \sigma_S$ and
$\wt{\sigma}_{S, \pi} \to \wt{\sigma}_S := %
\id_{\bop{\ini}} \uwkten \sigma_S$
pointwise in the strong operator topology as $\pi$ is refined, where
$\sigma_S$ and $\wt{\sigma}_S$ are ultraweakly continuous unital
$*$-homomorphisms. Furthermore, there exist isometric isomorphisms
\[
j_S : \fock_{S)} \otimes \fock_{[S} \to \fock %
\qquad \text{and} \qquad %
\wt{\jmath}_S : \fock_{S)} \otimes \ini \otimes \fock_{[S} \to %
\ini \otimes \fock
\]
such that
\begin{align}
\sigma_S( X ) & = j_S ( I_{S)} \otimes \Gamma_S X \Gamma_S^* ) j_S^*
\qquad \text{for all } X \in \bop{\fock}{} \label{eqn:sj} \\[1ex]
\text{and} \qquad %
\wt{\sigma}_S( Z ) & = \wt{\jmath}_S %
( I_{S)} \otimes \wt{\Gamma}_S Z \wt{\Gamma}_S^* ) \wt{\jmath}_S^* %
\qquad \text{for all } Z \in \bop{\ini \otimes \fock}{}.
\label{eqn:wtsj}
\end{align}
\end{theorem}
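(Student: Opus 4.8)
The plan is to deduce everything from the corresponding statements for trivial initial space, which are established in \cite{BeS14}: that $\sigma_{S,\pi}(X)\to\sigma_S(X)$ strongly for every $X\in\bop{\fock}$, that $\sigma_S$ is an ultraweakly continuous unital $*$-homomorphism, and that there is an isometric isomorphism $j_S:\fock_{S)}\otimes\fock_{[S}\to\fock$ satisfying \eqref{eqn:sj}. For $\wt{\jmath}_S$ I would take the composite of $I_\ini\otimes j_S$ with the tensor flip that interchanges the factors $\fock_{S)}$ and $\ini$ and fixes $\fock_{[S}$; this is manifestly a unitary, and since $\wt{\Gamma}_S(a\otimes X)\wt{\Gamma}_S^* = a\otimes\Gamma_S X\Gamma_S^*$, the identity \eqref{eqn:wtsj} for $Z=a\otimes X$ reduces to \eqref{eqn:sj} with this flip inserted. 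As both sides of \eqref{eqn:wtsj} are ultraweakly continuous in $Z$ and $\bop{\ini}\algten\bop{\fock}$ is ultraweakly dense in $\bop{\ini\otimes\fock}$, this yields \eqref{eqn:wtsj} in general. That $\sigma_S$ and $\wt{\sigma}_S$ are ultraweakly continuous unital $*$-homomorphisms is then immediate from \eqref{eqn:sj} and \eqref{eqn:wtsj}, each of which exhibits the map as the composite of conjugation by the relevant isometry onto its range (a $*$-isomorphism), the inflation $A\mapsto I_{S)}\otimes A$, and conjugation by a unitary.

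It remains to prove the strong convergence of $\wt{\sigma}_{S,\pi}$, and here I would first record two structural facts. Writing $\wt{P}_j := I_\ini\otimes S((\pi_{j-1},\pi_j])$, the $\wt{P}_j$ are mutually orthogonal projections summing to $I$ and $\wt{\sigma}_{S,\pi}(Z)=\sum_j\wt{\sigma}_{\pi_j}(Z)\wt{P}_j$; identity-adaptedness of $S$ together with the containment $\sigma_{\pi_j}(\bop{\fock})\subseteq I_{\pi_j)}\otimes\bop{\fock_{[\pi_j}}$ gives $\wt{\sigma}_{\pi_j}(Z)\wt{P}_j=\wt{P}_j\wt{\sigma}_{\pi_j}(Z)\wt{P}_j$, so the cross terms vanish by orthogonality and $\wt{\sigma}_{S,\pi}(Z)^*\wt{\sigma}_{S,\pi}(Z)=\wt{\sigma}_{S,\pi}(Z^*Z)$, whence $0\le\wt{\sigma}_{S,\pi}(Z^*Z)\le\|Z\|^2 I$ and $\|\wt{\sigma}_{S,\pi}(Z)\|\le\|Z\|$ uniformly in $\pi$. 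Next I would prove weak convergence: for a finite sum $\xi=\sum_{q=1}^{N}v_q\otimes y_q$, the slice maps satisfy $(\omega_{v_p,v_q}\otimes\id_{\fock})\circ(\id_{\bop{\ini}}\uwkten\sigma_{S,\pi})=\sigma_{S,\pi}\circ(\omega_{v_p,v_q}\otimes\id_{\fock})$, so that $\langle\xi,\wt{\sigma}_{S,\pi}(Z)\xi\rangle=\sum_{p,q=1}^{N}\langle y_p,\sigma_{S,\pi}(W_{pq})y_q\rangle$ with $W_{pq}:=(\omega_{v_p,v_q}\otimes\id_{\fock})(Z)$ independent of $\pi$; the scalar convergence and the finiteness of the sum produce the limit, which the same identity applied to $\sigma_S$ identifies as $\langle\xi,\wt{\sigma}_S(Z)\xi\rangle$. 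A polarisation and density argument, using the uniform bound, upgrades this to weak convergence $\wt{\sigma}_{S,\pi}(Z)\to\wt{\sigma}_S(Z)$ for all $Z$; applying it also to $Z^*Z$ gives $\wt{\sigma}_{S,\pi}(Z)^*\wt{\sigma}_{S,\pi}(Z)\to\wt{\sigma}_S(Z)^*\wt{\sigma}_S(Z)$ weakly, and the elementary fact that weak convergence of a net of operators together with weak convergence of the operators $T_\pi^*T_\pi$ implies strong convergence then finishes the proof.

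The hard part is this last passage from weak to strong convergence. Since finite linear combinations of elementary tensors are only ultraweakly, not norm, dense in $\bop{\ini\otimes\fock}$, one cannot approximate a general $Z$ in norm and appeal to uniform boundedness, and it is precisely the ``stopped homomorphism'' identity $\wt{\sigma}_{S,\pi}(Z)^*\wt{\sigma}_{S,\pi}(Z)=\wt{\sigma}_{S,\pi}(Z^*Z)$ — which rests on the mutual orthogonality of the projections $S((\pi_{j-1},\pi_j])$ and on identity-adaptedness — that makes the argument go through. A secondary point needing care is the order of the tensor factors in the definition of $\wt{\jmath}_S$, with the initial space $\ini$ sitting between $\fock_{S)}$ and $\fock_{[S}$; and, if one prefers not to quote the scalar $j_S$ and \eqref{eqn:sj} from \cite{BeS14}, then constructing $j_S$ and verifying \eqref{eqn:sj} through the approximants $E_{S,\pi}$ and $\Gamma_{S,\pi}$ and a strong-limit argument would itself be the most substantial step.
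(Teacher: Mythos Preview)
Your argument is correct, and for the structural parts (the construction of~$\wt{\jmath}_S$, the verification of~\eqref{eqn:wtsj} on simple tensors followed by ultraweak extension, and the deduction of normality) it matches the paper almost verbatim. The genuine difference is in how you establish strong convergence of $\wt{\sigma}_{S,\pi}(Z)$. The paper redoes the analytic Cauchy-net estimate from the scalar case on vectors $u\evec{f}$ with $f$ compactly supported, using the strong continuity of $r\mapsto\wt{\sigma}_r(Z)$, obtains a limit $\lambda_S(Z)$, and then identifies $\lambda_S(Z)$ with $\wt{\sigma}_S(Z)$ by a separate inner-product computation via the approximants $I_{S,\pi\cap[0,t]}$ and $E_{S,\pi\cap[0,t]}$. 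Your route is more algebraic: you exploit the commutation of $\wt{\sigma}_{\pi_j}(Z)$ with $I_\ini\otimes S((\pi_{j-1},\pi_j])$ to get the multiplicative identity $\wt{\sigma}_{S,\pi}(Z)^*\wt{\sigma}_{S,\pi}(Z)=\wt{\sigma}_{S,\pi}(Z^*Z)$, reduce weak convergence to the already-known scalar strong convergence via slice maps, and then upgrade weak to strong using that multiplicative identity together with the fact that $\wt{\sigma}_S$ is a $*$-homomorphism. This is shorter and avoids both the Cauchy estimate and the separate identification step; the paper's approach, by contrast, is more self-contained in that it does not invoke slice maps or the $*$-homomorphism property of the limit, and makes explicit which vectors are actually being approximated.
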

\begin{proof}
The convergence of $\sigma_{S, \pi}$ to $\sigma_S$, and the fact that
the latter is a unital $*$-homomorphism, follows from
\cite[Theorem~5.2]{BeS14}. The representation (\ref{eqn:sj}) is
\cite[Proposition~5.3]{BeS14}, and this shows that the map
$X \mapsto \sigma_S( X )$ is continuous when $\bop{\fock}$ is equipped
with the ultraweak topology, as ampliation gives a normal
representation of any von~Neumann algebra. In particular, the map
$\wt{\sigma}_S$ is an ultraweakly continuous unital $*$-homomorphism
such that (\ref{eqn:wtsj}) holds, where the isometric isomorphism
\[
\wt{\jmath}_S : \fock_{S)} \otimes \ini \otimes \fock_{[S} \to %
\ini \otimes \fock; \ %
x \otimes u \otimes y \mapsto u \otimes j_S( x \otimes y ),
\]
because (\ref{eqn:wtsj}) holds if $Z$ is a simple tensor, and both
sides are ultraweakly continuous functions of~$Z$.

It remains to prove that $\wt{\sigma}_{S, \pi}$ converges to
$\wt{\sigma}_S$. Working as in the proof of \cite[Theorem~5.2]{BeS14},
if the finite partition~$\pi'$ is a refinement of~$\pi$, then, for any
$u \in \ini$ and any $f \in \elltwo$ with compact support,
\begin{align*}
\| ( \wt{\sigma}_{S, \pi'} - \wt{\sigma}_{S, \pi} )( Z ) %
u & \evec{f} \| \\[1ex]
 & \le \| S\bigl( [ 0, \pi_n ] \bigr) \evec{f} \| %
\sup\{ \| ( \wt{\sigma}_r( Z ) - Z ) u \evec{f( \cdot + s )} : %
r \in [ 0, \delta_\pi ], \ s \in [ 0, \tau ] \} \\
 & \quad + \| S\bigl( ( \pi_n, \infty ) \bigr) u \evec{f} \| \, %
( \| Z \| + 1 ),
\end{align*}
where $\delta_\pi := \max\{ \pi_j - \pi_{j - 1} : j = 1, \ldots, n \}$
and $f$ has support contained in
$[ 0, \tau ] \subseteq [ 0, \infty )$.

Using the same argument as in the proof of \cite[Theorem~5.2]{BeS14},
and noting that $r \mapsto \wt{\sigma}_r( Z )$ is strongly continuous,
it now follows that $\wt{\sigma}_{S, \pi}( Z ) u \evec{f}$ is
convergent, as $\pi$ is refined, for any~$u \in \ini$ and any
$f \in \elltwo$ with compact support; let the limit be denoted by
$\lambda_S( Z ) u \evec{f}$ and extend by linearity. Since
\[
\| \lambda_S( Z ) z \| = %
\lim_\pi \| \wt{\sigma}_{S, \pi}( Z ) z \| \le %
\| Z \| \, \| z \| \qquad %
\text{for all } z \in \ini \algten \evecs_c,
\]
where $\evecs_c$ is the linear span of those exponential vectors
corresponding to functions with compact support, there exists a
bounded linear operator $\lambda_S( Z )$ on $\ini \otimes \fock$
which extends the linear map~$z \mapsto \lambda_S( Z ) z$. Furthermore,
the usual approximation argument gives that
$\wt{\sigma}_{S, \pi}( Z ) \to \lambda_S( Z )$ in the strong operator
topology, everywhere on $\ini \otimes \fock$.

To conclude, we use (\ref{eqn:wtsj}) and argue as the proof of
\cite[Proposition~5.3]{BeS14}. Using the notation of that proof and
the identity at the top of \cite[p.1158]{BeS14}, that
\[
\langle I_{S, \pi \cap [ 0, t ]}( f, \Gamma \evec{g} ), %
\sigma_{S, \pi}( X ) %
I_{S, \pi \cap [ 0, t ]}( f', \Gamma \evec{g'} ) \rangle = %
\langle E_{S, \pi \cap [ 0, t ]} \evec{f}, %
E_{S, \pi \cap [ 0, t ]} \evec{f'} \rangle \, %
\langle \evec{g}, X \evec{g'} \rangle,
\]
together with the ultraweak continuity of $\wt{\sigma}_{S, \pi}$,
it follows that
\begin{multline*}
\langle u \otimes I_{S, \pi \cap [ 0, t ]}( f, \Gamma \evec{g} ), %
\wt{\sigma}_{S, \pi}( Z ) u' \otimes %
I_{S, \pi \cap [ 0, t ]}( f', \Gamma \evec{g'} ) \rangle \\[1ex]
 = \langle E_{S, \pi \cap [ 0, t ]} \evec{f}, %
E_{S, \pi \cap [ 0, t ]} \evec{f'} \rangle \, %
\langle u \evec{g}, Z u' \evec{g'} \rangle
\end{multline*}
for all $u$, $u' \in \ini$, $f$, $f'$, $g$, $g' \in \elltwo$,
$t \in ( 0, \infty )$ and $Z \in \bop{\ini \otimes \fock}{}$. As $\pi$
is refined, the right-hand side converges to
\[
\langle E_{S, t} \evec{f} \otimes u \otimes \Gamma_S \evec{g}, %
( I_{S)} \otimes \wt{\Gamma}_S Z \wt{\Gamma}_S^* ) %
E_{S, t} \evec{f'} \otimes u' \otimes \Gamma_S \evec{g'} \rangle,
\]
by \cite[Theorem~3.7]{BeS14}, whereas the left-hand side converges to
\begin{multline*}
\langle u \otimes j_S( E_{S, t} \evec{f} \otimes \Gamma_S \evec{g} ),
\lambda_S( Z ) u' \otimes %
j_S( E_{S, t} \evec{f'} \otimes \Gamma_S \evec{g'} ) \rangle \\[1ex]
 = \langle \wt{\jmath}_S( E_{S, t} \evec{f} \otimes u \otimes %
\Gamma_S \evec{g} ), \lambda_S( Z ) \wt{\jmath}_S( %
E_{S, t} \evec{f'} \otimes u' \otimes \Gamma_S \evec{g'} ) \rangle,
\end{multline*}
by \cite[Lemma~3.4 and Theorem~3.10]{BeS14}. The result follows.
\end{proof}

\begin{remark}\label{rem:socts}
The representations (\ref{eqn:sj}) and (\ref{eqn:wtsj}) also give that
$X \mapsto \sigma_S( X )$ and $Z \mapsto \wt{\sigma}_S( Z )$ are
continuous on bounded subsets of $\bop{\fock}{}$ and
$\bop{\ini \otimes \fock}{}$, respectively, when these spaces are
equipped with the strong operator topology, since this is true of the
ampliation map $T \mapsto I \otimes T$.
\end{remark}

\section{The cocycle identity with two stop times}

\begin{definition}[{\cite[Definition~4.1]{BeS14}}]
The \emph{convolution} $S \conv T$ of two finite quantum stop times
$S$ and $T$ is
\[
S \conv T : \borel( \R_+ ) \to \bop{\fock}{}; \ %
A \mapsto ( S \otimes T )\bigl( f^{-1}( A ) \bigr),
\]
where
\[
f : \R_+ \times \R_+ \to \R_+; \ ( x, y ) \mapsto x + y
\]
and
\[
S \otimes T : \borel( \R_+ \times \R_+ ) \to \bop{\fock}{}; \ %
A \times B \mapsto j_S( %
S( A )|_{\fock_{S)}} \otimes \Gamma_S T( B ) \Gamma_S^* ) %
j_S^*.
\]
\end{definition}

\begin{lemma}\label{lem:product}
Let $S$ and $T$ be finite quantum stop times. Then
\begin{equation}\label{eqn:prodmeas}
( S \otimes T )( A \times B ) = %
S( A ) \sigma_S\bigl( T( B ) \bigr) \qquad %
\text{for all } A, B \in \borel[ 0, \infty ].
\end{equation}
\end{lemma}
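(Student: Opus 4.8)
The plan is to extract the factor $S(A)$ from the definition of $S\otimes T$ by means of the representation~(\ref{eqn:sj}) of $\sigma_S$, thereby reducing~(\ref{eqn:prodmeas}) to a single adaptedness statement about the projection $S(A)$ and the factorisation $j_S$.

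First I would observe that, directly from the definition of $S\otimes T$, from the fact that $j_S$ is an isometric isomorphism (so that $j_S^{*}j_S = I$), and from the identities $T([0,\infty]) = I$ and $\Gamma_S\Gamma_S^{*} = I_{[S}$,
\begin{align*}
(S\otimes T)(A\times B)
&= j_S\bigl(S(A)|_{\fock_{S)}}\otimes\Gamma_S T(B)\Gamma_S^{*}\bigr)j_S^{*}\\
&= j_S\bigl(S(A)|_{\fock_{S)}}\otimes I_{[S}\bigr)\bigl(I_{S)}\otimes\Gamma_S T(B)\Gamma_S^{*}\bigr)j_S^{*}\\
&= j_S\bigl(S(A)|_{\fock_{S)}}\otimes I_{[S}\bigr)j_S^{*}\cdot j_S\bigl(I_{S)}\otimes\Gamma_S T(B)\Gamma_S^{*}\bigr)j_S^{*}\\
&= j_S\bigl(S(A)|_{\fock_{S)}}\otimes I_{[S}\bigr)j_S^{*}\cdot\sigma_S\bigl(T(B)\bigr),
\end{align*}
the last line being the representation~(\ref{eqn:sj}) of Theorem~\ref{thm:wtsconv}. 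Comparing with the right-hand side of~(\ref{eqn:prodmeas}), the lemma reduces to the adaptedness identity $S(A) = j_S\bigl(S(A)|_{\fock_{S)}}\otimes I_{[S}\bigr)j_S^{*}$, which is precisely the assertion that $(S\otimes T)(A\times[0,\infty]) = S(A)$; note that the well-definedness of $S(A)|_{\fock_{S)}}$ as an operator on $\fock_{S)} = E_S(\fock)$ already presupposes that $S(A)$ commutes with $E_S$.

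To prove this adaptedness identity I would first strengthen identity adaptedness, $S([0,t])\in\bop{\fock_{t)}}\otimes I_{[t}$, to $S(A)\in\bop{\fock_{t)}}\otimes I_{[t}$ for every Borel $A\subseteq[0,t]$: each increment $S((s',s]) = S([0,s]) - S([0,s'])$ already lies in $\bop{\fock_{t)}}\otimes I_{[t}$ for $s'<s\le t$, and a monotone class argument based on property~(i) of a finite quantum stop time propagates this to arbitrary Borel subsets of $[0,t]$; the case of unbounded $A$ is recovered by writing $A = \bigcup_n(A\cap[0,n])$ and using $S(\{\infty\}) = 0$ and $S([0,\infty]) = I$ to get $S(A\cap[0,n])\to S(A)$ strongly. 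Feeding this $t$-adaptedness of the truncated projections into the partition approximants of Proposition~\ref{prp:expn} and Theorem~\ref{thm:wtsconv} and the construction of $j_S$ there (which rest on \cite[Theorems~3.7, 3.8 and~3.10]{BeS14}), one obtains that $j_S^{*}$ carries $S(A)$ onto $S(A)|_{\fock_{S)}}\otimes I_{[S}$; this adaptedness is in any case already implicit in the development of the convolution in~\cite{BeS14}, so its conclusion may simply be cited. I expect this last step to be the main obstacle: for a general Borel set $A$ the projection $S(A)$ need \emph{not} commute with the individual truncation projections $E_{\pi_j}$, so one cannot merely claim that $S(A)$ commutes with each approximant $E_{S,\pi}$ and pass to the strong limit; one must instead track how $S(A)$ interacts with the vacuum-after-$\pi_j$ structure of $E_{\pi_j}$ inside each summand $S\bigl((\pi_{j-1},\pi_j]\bigr)E_{\pi_j}$, precisely the bookkeeping performed in \cite[Theorems~3.7--3.10]{BeS14}. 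Granted the adaptedness identity, the lemma follows in one line from the display above.
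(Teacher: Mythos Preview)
Your reduction is exactly the paper's: factor
\[
(S\otimes T)(A\times B)
= j_S\bigl(S(A)|_{\fock_{S)}}\otimes I_{[S}\bigr)j_S^{*}\cdot
  j_S\bigl(I_{S)}\otimes\Gamma_S T(B)\Gamma_S^{*}\bigr)j_S^{*},
\]
identify the second factor as $\sigma_S\bigl(T(B)\bigr)$ via~(\ref{eqn:sj}), and then reduce everything to the adaptedness identity
$j_S\bigl(S(A)|_{\fock_{S)}}\otimes I_{[S}\bigr)j_S^{*}=S(A)$.

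The only real difference is in how that adaptedness identity is verified. You argue abstractly: first upgrade $S([0,t])\in\bop{\fock_{t)}}\otimes I_{[t}$ to $S(A)\in\bop{\fock_{t)}}\otimes I_{[t}$ for $A\subseteq[0,t]$ by a monotone-class argument, and then appeal to the construction of $j_S$ in \cite{BeS14}. This is correct but, as you yourself note, the second step is where all the work hides. The paper instead bypasses the approximant bookkeeping entirely by computing on a total set with the explicit integral formula for $j_S$ from \cite[Lemma~3.4 and Theorem~3.10]{BeS14}:
\[
j_S\bigl(S([0,t])E_S\evec{f}\otimes\Gamma_S x\bigr)
=\int_{[0,t]}S(\rd s)\,\evec{f|_{[0,s)}}\otimes\Gamma_s x
=S([0,t])\,j_S\bigl(E_S\evec{f}\otimes\Gamma_S x\bigr),
\]
which gives the identity for $A=[0,t]$ in one line; the general $A$ then follows because both sides are spectral measures agreeing on all $[0,t]$. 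This avoids precisely the obstacle you flagged (that $S(A)$ need not commute with the individual $E_{\pi_j}$), so it is worth knowing as the cleaner route.
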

\begin{proof}
If $t \in [ 0, \infty ]$ then, by Theorems~3.7 and~3.10, together with
Lemma~3.4, of~\cite{BeS14},
\begin{align*}
j_S( S\bigl( [ 0, t ] \bigr) E_S \evec{f} \otimes \Gamma_S x ) = %
j_S( E_{S, t} \evec{f} \otimes \Gamma_S x ) & = %
\int_{[ 0, t ]} S( \rd s ) \evec{f|_{[ 0, s )}} \otimes %
\Gamma_s x \\[1ex]
 & = S\bigl( [ 0, t ] \bigr) \int_{[ 0, \infty ]} S( \rd s ) %
\evec{f|_{[ 0, s )}} \otimes \Gamma_s x \\[1ex]
 & = S\bigl( [ 0, t ] \bigr) %
j_S( E_S \evec{f} \otimes \Gamma_S x )
\end{align*}
for all $f \in \elltwo$ and $x \in \fock$. Hence
\[
j_S( S( A )|_{\fock_{S)}} \otimes I_{[S} ) j_S^* = S( A ) %
\qquad \text{for all } A \in \borel[ 0, \infty ].
\]
It follows that
\begin{align*}
( S \otimes T ) ( A \times B ) & := %
j_S( S( A )|_{\fock_{S)}} \otimes \Gamma_S T( B ) \Gamma_S^* ) %
j_S^* \\[1ex]
& \hphantom{:}= j_S( S( A )|_{\fock_{S)}} \otimes I_{[S} ) j_S^* \, %
j_S( I_{S)} \otimes \Gamma_S T( B ) \Gamma_S^* ) j_S^* = %
S( A ) \sigma_S\bigl( T( B ) \bigr)
\end{align*}
for all $A$, $B \in \borel[ 0, \infty ]$, where the final identity is
a consequence of (\ref{eqn:sj}).
\end{proof}

\begin{remark}\label{rem:prodst}
(i) If the quantum stop times $S$ and $T$ are extended by ampliation
to act on~$\ini \otimes \fock$ then the identity
(\ref{eqn:prodmeas}) becomes
\[
( S \otimes T )( A \times B ) = %
S( A ) \wt{\sigma}_S\bigl( T( B ) \bigr) \qquad %
\text{for all } A, B \in \borel[ 0, \infty ].
\]
This extension will be made when appropriate without further comment.

(ii) If $0 \le p < q < \infty$ and $0 \le r < s < \infty$ then
Theorem~\ref{thm:wtsconv} implies that
\begin{align*}
( S \otimes T )\bigl( ( p, q ] \times ( r, s ] \bigr) & = %
S\bigl( ( p, q ] \bigr) \wt{\sigma}_S( T\bigl( ( r, s ] \bigr) )
\\[1ex]
 & = \stlim_\pi \sum_{j = 1}^m %
S\bigl( ( \pi_{j - 1}, \pi_j ] \bigr) \, %
\wt{\sigma}_{\pi_j}( T \bigl( ( r, s ] \bigr) ) %
\in I_\ini \otimes \bop{\fock_{q + s)}}{} \otimes I_{[q + s},
\end{align*}
where $\pi = \{ p = \pi_0 < \cdots < \pi_m = q \}$ is a typical
finite partition of the interval $[ p, q ]$.
\end{remark}

\begin{lemma}\label{lem:discprod}
Suppose $S$ and $T$ are finite quantum stop times, with $T$ discrete,
so that there exists a finite set
$\{ t_1 < \cdots < t_m \} \subseteq ( 0, \infty )$ such that
$T\bigl( \{ t_1, \ldots, t_m \} \bigr) = I$. Then
\[
( S \conv T )( C ) = \sum_{j = 1}^m S\bigl( ( C - t_j )_+ \bigr) %
\sigma_S( T\bigl( \{ t_j \} \bigr) ) \qquad \text{for all } %
C \in \borel( \R_+ ),
\]
where $( C - t )_+ := \{ s \in \R_+ : s + t \in C \}$ for all
$t \in \R_+$.
\end{lemma}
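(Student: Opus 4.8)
The plan is to unwind the definition of the convolution and then to exploit the fact that, because $T$ is discrete, the operator-valued measure $S \otimes T$ is concentrated on the finitely many lines $\R_+ \times \{ t_j \}$, on each of which the pushforward under $( x, y ) \mapsto x + y$ acts by translation. Recall from the definition above that $( S \conv T )( C ) = ( S \otimes T )\bigl( f^{-1}( C ) \bigr)$ for all $C \in \borel( \R_+ )$, where $f( x, y ) = x + y$. Since $( S \otimes T )( A \times B )$ is an orthogonal projection for every rectangle $A \times B$ --- being the conjugate by the isometric isomorphism $j_S$ of the tensor product of the projections $S( A )|_{\fock_{S)}}$ and $\Gamma_S T( B ) \Gamma_S^*$ --- and since the projections attached to two disjoint rectangles $A_1 \times B_1$ and $A_2 \times B_2$ are mutually orthogonal (their product vanishes, as $( A_1 \times B_1 ) \cap ( A_2 \times B_2 ) = \emptyset$ forces $S( A_1 ) S( A_2 ) = 0$ or $T( B_1 ) T( B_2 ) = 0$), the rectangle assignment $S \otimes T$ extends to a projection-valued measure on $\borel( \R_+ \times \R_+ )$; in particular it is additive and monotone.

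Next I would check that $S \otimes T$ gives no mass to $\R_+ \times ( \R_+ \setminus \{ t_1, \ldots, t_m \} )$. As $S$ and $T$ are finite quantum stop times (so $S( \{ \infty \} ) = T( \{ \infty \} ) = 0$), one has $S( \R_+ ) = T( \R_+ ) = I$, whence $T\bigl( \R_+ \setminus \{ t_1, \ldots, t_m \} \bigr) = I - T\bigl( \{ t_1, \ldots, t_m \} \bigr) = 0$; Lemma~\ref{lem:product} and the fact that $\sigma_S$ is a unital $*$-homomorphism then give
\begin{align*}
( S \otimes T )\bigl( \R_+ \times ( \R_+ \setminus \{ t_1, \ldots, t_m \} ) \bigr)
& = S( \R_+ ) \, \sigma_S\bigl( T( \R_+ \setminus \{ t_1, \ldots, t_m \} ) \bigr) \\
& = \sigma_S( 0 ) = 0.
\end{align*}
By monotonicity, every Borel subset of $\R_+ \times ( \R_+ \setminus \{ t_1, \ldots, t_m \} )$ is therefore $( S \otimes T )$-null.

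Finally I would slice the preimage along these lines. For each $j$ one has $f^{-1}( C ) \cap \bigl( \R_+ \times \{ t_j \} \bigr) = ( C - t_j )_+ \times \{ t_j \}$, and $( C - t_j )_+$ is Borel, being a translate of $C$ intersected with $\R_+$. Hence $f^{-1}( C )$ is the disjoint union of the $m$ rectangles $( C - t_j )_+ \times \{ t_j \}$ together with a Borel subset of $\R_+ \times ( \R_+ \setminus \{ t_1, \ldots, t_m \} )$, so that, using additivity, the vanishing just established, and Lemma~\ref{lem:product} once more,
\begin{align*}
( S \conv T )( C ) = ( S \otimes T )\bigl( f^{-1}( C ) \bigr)
& = \sum_{j = 1}^m ( S \otimes T )\bigl( ( C - t_j )_+ \times \{ t_j \} \bigr) \\
& = \sum_{j = 1}^m S\bigl( ( C - t_j )_+ \bigr) \, \sigma_S\bigl( T( \{ t_j \} ) \bigr),
\end{align*}
which is the asserted identity. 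I do not expect a genuine obstacle: the content is carried entirely by Lemma~\ref{lem:product} and the discreteness of $T$, and the only points needing care are routine measure-theoretic ones --- that $S \otimes T$ is (the extension of the rectangle assignment to) a projection-valued measure, so that additivity and monotonicity are legitimate, and that sections of Borel sets are Borel.
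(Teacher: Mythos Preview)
Your proposal is correct and follows essentially the same route as the paper: both arguments invoke Lemma~\ref{lem:product} to handle rectangles, use the discreteness of $T$ to reduce to the finitely many lines $\R_+ \times \{ t_j \}$, and read off the translation in the first variable. The only cosmetic difference is that the paper verifies $\sum_{j} ( S \otimes T )( \R_+ \times \{ t_j \} ) = I$ and then decomposes, whereas you verify $( S \otimes T )\bigl( \R_+ \times ( \R_+ \setminus \{ t_1, \ldots, t_m \} ) \bigr) = 0$ and use monotonicity; these are equivalent, and your extra care about the projection-valued extension and Borel sections is sound but not strictly needed here.
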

\begin{proof}
Note first that, by Lemma~\ref{lem:product},
\[
\sum_{j = 1}^m ( S \otimes T )( \R_+ \times \{ t_j \} ) = %
\sum_{j = 1}^m S( \R_+ ) \sigma_S( T\bigl( \{ t_j \} \bigr) ) = %
\sigma_S( T\bigl( \{ t_1, \ldots, t_m \} \bigr) ) = I,
\]
so
\begin{align*}
( S \conv T )( C ) & = ( S \otimes T )%
\bigl( \{ ( x, y ) \in \R_+^2 : x + y \in C \} \bigr) \\[1ex]
 & = \sum_{j = 1}^m ( S \otimes T )%
\bigl( \{ ( x, t_j ) \in \R_+^2 : x + t_j \in C \} \bigr) \\[1ex]
 & = \sum_{j = 1}^m S\bigl( ( C - t_j )_+ \bigr) %
\sigma_S( T\bigl( \{ t_j \} \bigr) ).
\qedhere
\end{align*}
\end{proof}

\begin{remark}
If the discrete stopping time $T$ is supported at one point, so that
$T\bigl( \{ t \} \bigr) = I$ for some $t \in ( 0, \infty )$, then
$S \star T = S + t$, where
$( S + t )( A ) := S\bigl( ( A - t )_+ \bigr)$ for all
$A \in \borel[ 0, \infty ]$.
\end{remark}

\begin{definition}
Let $p \in \bop{\mul}$ be an orthogonal projection and, for all
$t \in \R_+$, let $P_{[t} \in \bop{\fock_{[t}}$ be the orthogonal
projection such that $P_{[t} \evec{f} = \evec{p f}$ for all
$f \in L^2( [ t, \infty ); \mul )$, where~$p$ acts pointwise.

A family of bounded operators
$V = ( V_t )_{t \in \R_+} \subseteq \bop{\ini \otimes \fock}$ is
\emph{$p$-adapted} if
\[
V_t = V_{t)} \otimes P_{[t} \qquad \text{for every } t \in \R_+,
\]
where $V_{t)} \in \bop{\ini \otimes \fock_{t)}}$. If $p = 0$ or
$p = I_\mul$ then $p$-adaptedness is known as vacuum adaptedness or
identity adaptedness, respectively.

Given a $p$-adapted family of bounded operators $V$, the
\emph{identity-adapted projection} $\wh{V}$ is the family of operators
$\wh{V}$, where $\wh{V}_t := V_{t)} \otimes I_{[t}$ for all
$t \in \R_+$.

A $p$-adapted family of bounded operators $V$ is an
\emph{isometric cocycle} if $\wh{V}_t$ is an isometry for
all~$t \in \R_+$ and
\[
V_{s + t} = \wh{V}_s \, \wt{\sigma}_s( V_t ) \qquad %
\text{for all } s, t \in \R_+.
\]
A $p$-adapted isometric cocycle $V$ is \emph{strongly continuous} if
$t \mapsto V_t z$ is continuous for all $z \in \ini \otimes \fock$.
\end{definition}

\begin{theorem}%
{\textup{\cite[Theorem~6.5, Corollary 6.6 and Theorem~7.2]{BeS14}}}
If $S$ is a finite quantum stop time, $V$ is a strongly continuous
isometric $p$-adapted cocycle and
\[
V_{S, \pi} := \sum_{k = 1}^{n + 1} V_{\pi_k} \, %
S\bigl( ( \pi_{k - 1}, \pi_k ] \bigr)
\]
for any finite partition
$\pi = \{ 0 = \pi_0 < \cdots < \pi_{n + 1} = t \}$ of $[ 0, t ]$, then
$V_{S, \pi}$ is a contraction and there exists a contraction
$V_{S, t} \in \bop{\ini \otimes \fock}$ such that
$V_{S, \pi} \to V_{S, t}$ in the strong operator topology as $\pi$ is
refined, for all $t \in ( 0, \infty )$. Furthermore, there exists a
contraction $V_S \in \bop{\ini \otimes \fock}$ such that
$V_{S, t} \to V_S$ in the strong operator topology as $t \to \infty$,
and
\[
V_{S + t} = %
\wh{V}_S \, \wt{\sigma}_S( V_t ) \qquad \text{for all } t \in \R_+.
\]
\end{theorem}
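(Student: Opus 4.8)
Since this statement is \cite[Theorem~6.5, Corollary~6.6 and Theorem~7.2]{BeS14}, with the non-trivial initial space~$\ini$ handled there (and, for the maps involved, in Section~2 above) by ampliation, no independent proof is really needed here; but, were one to reprove it, the plan would be as follows. It is convenient to note at the outset that $\wh{V}$ is itself an identity-adapted isometric cocycle: applying the identity-adapted projection to $V_{s + t} = \wh{V}_s \wt{\sigma}_s( V_t )$ yields $\wh{V}_{s + t} = \wh{V}_s \wt{\sigma}_s( \wh{V}_t )$.

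First, the bound $\| V_{S, \pi} \| \le 1$. Writing $z_k := S\bigl( ( \pi_{k - 1}, \pi_k ] \bigr) z$, the projections $S\bigl( ( \pi_{k - 1}, \pi_k ] \bigr)$ are mutually orthogonal with sum $S\bigl( ( 0, t ] \bigr) \le I$, so $\sum_k \| z_k \|^2 \le \| z \|^2$, and each $V_{\pi_k}$ is a contraction. The cross terms vanish: for $j < k$ the cocycle identity gives $V_{\pi_j}^* V_{\pi_k} = ( V_{\pi_j}^* \wh{V}_{\pi_j} ) \, \wt{\sigma}_{\pi_j}( V_{\pi_k - \pi_j} )$, which, since $\wh{V}_{\pi_j}$ is an isometry, acts as the identity on the $\fock_{\pi_j)}$ tensor slot and so commutes with $S\bigl( ( \pi_{j - 1}, \pi_j ] \bigr)$; hence $\langle V_{\pi_j} z_j, V_{\pi_k} z_k \rangle = \langle z, V_{\pi_j}^* V_{\pi_k} \, S\bigl( ( \pi_{j - 1}, \pi_j ] \bigr) S\bigl( ( \pi_{k - 1}, \pi_k ] \bigr) z \rangle = 0$. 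Therefore $\| V_{S, \pi} z \|^2 = \sum_k \| V_{\pi_k} z_k \|^2 \le \| z \|^2$.

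Next, the strong limits $V_{S, t}$ and $V_S$. These are obtained exactly as in the proofs of Proposition~\ref{prp:expn} and Theorem~\ref{thm:wtsconv}: for a refinement $\pi'$ of $\pi$, one bounds $\| ( V_{S, \pi'} - V_{S, \pi} ) u \evec{f} \|$, for $f$ of compact support, by a supremum of norms over increments of length at most $\delta_\pi$ --- controlled by the strong continuity of $r \mapsto V_r$ --- plus a tail term involving $S\bigl( ( \pi_n, t ] \bigr)$, so that $V_{S, \pi} u \evec{f}$ is Cauchy; one extends to $\ini \otimes \fock$ by density of $\ini \algten \evecs_c$ and the bound $\| V_{S, \pi} \| \le 1$, and then uses $S\bigl( ( t, \infty ] \bigr) \to 0$ strongly, which holds since $S$ is finite, to pass to $V_S$. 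The initial space is an inert tensor factor throughout.

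The stopped cocycle identity is the heart of the matter. I would first treat $S$ discrete, supported at $\{ s_1 < \cdots < s_m \}$. Then, by the strong continuity of $V$ and of $r \mapsto \wt{\sigma}_r$ together with Theorem~\ref{thm:wtsconv}, one has $V_S = \sum_i V_{s_i} S( \{ s_i \} )$, so $\wh{V}_S = \sum_i \wh{V}_{s_i} S( \{ s_i \} )$ and $V_{S + t} = \sum_i V_{s_i + t} S( \{ s_i \} )$, while $\wt{\sigma}_S( V_t ) = \sum_i \wt{\sigma}_{s_i}( V_t ) S( \{ s_i \} )$. Multiplying out $\wh{V}_S \wt{\sigma}_S( V_t )$ and using that $S( \{ s_i \} )$, acting only on $\fock_{s_i)}$, commutes with $\wt{\sigma}_{s_i}( V_t )$ (the identity there), together with $S( \{ s_j \} ) S( \{ s_i \} ) = 0$ for $i \ne j$, all off-diagonal terms vanish, leaving
\[
\wh{V}_S \wt{\sigma}_S( V_t ) = \sum_i \wh{V}_{s_i} \wt{\sigma}_{s_i}( V_t ) S( \{ s_i \} ) = \sum_i V_{s_i + t} S( \{ s_i \} ) = V_{S + t}
\]
by the unstopped cocycle identity $V_{s_i + t} = \wh{V}_{s_i} \wt{\sigma}_{s_i}( V_t )$. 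For general $S$, take $S_\pi$ to be the discrete stop time obtained by moving the mass of $S$ on each $( \pi_{j - 1}, \pi_j ]$ to its right endpoint $\pi_j$; then $V_{S_\pi} \to V_S$, $\wh{V}_{S_\pi} \to \wh{V}_S$, $V_{S_\pi + t} \to V_{S + t}$ (by the parts already in hand, applied also to the cocycle $\wh{V}$) and $\wt{\sigma}_{S_\pi}( V_t ) \to \wt{\sigma}_S( V_t )$ (Theorem~\ref{thm:wtsconv}), all in the strong operator topology, so, the nets being uniformly bounded, the products converge strongly and $V_{S + t} = \wh{V}_S \wt{\sigma}_S( V_t )$. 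I expect the genuine work to lie in the limiting analysis --- the estimate of the previous paragraph, and the justification that the strong limits may be taken inside these products --- together with the correct handling of $V_S$ and $\wh{V}_S$ as operators attached to $S$ via the factorisations $j_S$ and $\wt{\jmath}_S$ of Theorem~\ref{thm:wtsconv}; the algebra in each case is routine once the adaptedness is correctly set up.
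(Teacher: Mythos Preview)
Your proposal is correct and compatible with the paper's approach, but it is far more expansive than what the paper actually does. The paper's proof is three sentences: since the statement is lifted verbatim from \cite[Theorem~6.5, Corollary~6.6 and Theorem~7.2]{BeS14}, the only point not already covered there is the strong convergence of the product $\wh{V}_{S, \pi \cap [ 0, t ]} \, \wt{\sigma}_{S, \pi}( Z ) \to \wh{V}_{S, t} \, \wt{\sigma}_S( Z )$ appearing at the end of \cite[Proof of Theorem~7.2]{BeS14}; the paper observes that $\wh{V}_{S, \pi \cap [ 0, t ]} \to \wh{V}_{S, t}$ strongly with uniform bound~$1$ (from \cite[Lemma~6.4 and Theorem~6.5]{BeS14}), so the product converges provided $\wt{\sigma}_{S, \pi}( Z ) \to \wt{\sigma}_S( Z )$ strongly, and this is exactly what Theorem~\ref{thm:wtsconv} supplies. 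Your outline contains this same observation (uniform boundedness plus Theorem~\ref{thm:wtsconv} for the product limit), but surrounds it with a reconstruction of the contractivity and Cauchy arguments that the paper simply delegates to the citation. In short: you have correctly located the one genuine verification the present paper needs to make, but you have also reproved much that the paper is content to cite.
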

\begin{proof}
The only thing which not immediate is the assertion, at end of
\cite[Proof of Theorem~7.2]{BeS14}, that
\[
\wh{V}_{S, \pi \cap [ 0, t ]} \, \wt{\sigma}_{S, \pi}( Z ) \to %
\wh{V}_{S, t} \, \wt{\sigma}_S( Z )
\]
as the partition $\pi$ is refined, for all
$Z \in \bop{\ini \otimes \fock}$ and $t \ge 0$. (In fact, a very
slightly weaker claim is made.) It follows from
\cite[Theorem~6.5]{BeS14} that
\[
\wh{V}_{S, \pi \cap [ 0, t ]} \to \wh{V}_{S, t}
\]
in the strong operator topology, and
$\| \wh{V}_{S, \pi \cap [ 0, t ]} \| \le 1$ for all $\pi$, by
\cite[Lemma~6.4]{BeS14}, so the claim holds as long as
$\wt{\sigma}_{S, \pi}( Z ) \to \wt{\sigma}_S( Z )$ in the strong
operator topology. However, this is part of Theorem~\ref{thm:wtsconv}.
\end{proof}

\begin{lemma}\label{lem:discrete}
If $S$ and $T$ are finite quantum stop times, with $T$ discrete, then
\[
V_{S \conv T} = \wh{V}_S \, \wt{\sigma}_S( V_T )
\]
for any strongly continuous isometric $p$-adapted cocycle $V$.
\end{lemma}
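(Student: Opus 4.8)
The plan is to reduce the statement to the deterministic stopped cocycle identity $V_{S + t} = \wh{V}_S\,\wt{\sigma}_S(V_t)$ recalled in the theorem above, exploiting the discreteness of $T$ together with the decomposition of $S \conv T$ supplied by Lemma~\ref{lem:discprod}. Write $T\bigl(\{t_1,\ldots,t_m\}\bigr) = I$ with $0 < t_1 < \cdots < t_m$, and regard $S$, $T$ and $S \conv T$ throughout as acting on $\ini \otimes \fock$ by ampliation; after the extension described in Remark~\ref{rem:prodst}(i), Lemma~\ref{lem:discprod} then reads
\[
(S \conv T)(C) = \sum_{j=1}^m S\bigl((C - t_j)_+\bigr)\,\wt{\sigma}_S\bigl(T(\{t_j\})\bigr) \qquad \text{for all } C \in \borel(\R_+).
\]
Taking $C = \R_+$ gives $(S \conv T)(\R_+) = \wt{\sigma}_S(I) = I$, so $S \conv T$ extends to a finite quantum stop time and the construction of $V_{S \conv T}$ from the theorem above applies; likewise each $S + t_j$ is a finite quantum stop time with $(S + t_j)(A) = S\bigl((A - t_j)_+\bigr)$.

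First I would record that $V_T = \sum_{j=1}^m V_{t_j}\,T(\{t_j\})$. For a finite partition $\pi$ of $[0,t]$ with $t \ge t_m$ and mesh small enough that no partition interval contains two of the $t_j$, the only surviving summands of $V_{T,\pi} = \sum_k V_{\pi_k}\,T\bigl((\pi_{k-1},\pi_k]\bigr)$ are those indexed by the interval $(\pi_{k_j-1},\pi_{k_j}]$ containing $t_j$, so $V_{T,\pi} = \sum_{j=1}^m V_{\pi_{k_j}}\,T(\{t_j\})$; as $\pi$ is refined one has $\pi_{k_j} \downarrow t_j$, and strong continuity of $V$ gives $V_{T,\pi} \to \sum_j V_{t_j}\,T(\{t_j\})$, which is $V_{T,t}$ and hence, being independent of $t \ge t_m$, equal to $V_T$.

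The key step is the finite-sum identity $V_{S \conv T,\pi} = \sum_{j=1}^m V_{S + t_j,\pi}\,\wt{\sigma}_S\bigl(T(\{t_j\})\bigr)$, valid for every finite partition $\pi$ of an interval $[0,t]$. To obtain it, substitute the decomposition of $S \conv T$ above into $V_{S \conv T,\pi} = \sum_k V_{\pi_k}\,(S \conv T)\bigl((\pi_{k-1},\pi_k]\bigr)$, interchange the two finite sums, observe that $S\bigl(((\pi_{k-1},\pi_k] - t_j)_+\bigr) = (S + t_j)\bigl((\pi_{k-1},\pi_k]\bigr)$ by the definition of $S + t_j$, and factor out the operator $\wt{\sigma}_S(T(\{t_j\}))$, which is independent of $k$. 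Now refine $\pi$ and then let $t \to \infty$: by the theorem above, $V_{S \conv T,\pi} \to V_{S \conv T}$ and $V_{S + t_j,\pi} \to V_{S + t_j}$ in the strong operator topology along these limits, and since right multiplication by a bounded operator is strongly continuous and the sum over $j$ is finite, this yields $V_{S \conv T} = \sum_{j=1}^m V_{S + t_j}\,\wt{\sigma}_S\bigl(T(\{t_j\})\bigr)$.

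To finish, apply the deterministic stopped cocycle identity to each $t_j$, giving $V_{S + t_j} = \wh{V}_S\,\wt{\sigma}_S(V_{t_j})$, and use that $\wt{\sigma}_S$ is a $*$-homomorphism together with the first step:
\[
V_{S \conv T} = \wh{V}_S \sum_{j=1}^m \wt{\sigma}_S\bigl(V_{t_j}\,T(\{t_j\})\bigr) = \wh{V}_S\,\wt{\sigma}_S\Bigl(\sum_{j=1}^m V_{t_j}\,T(\{t_j\})\Bigr) = \wh{V}_S\,\wt{\sigma}_S(V_T).
\]
The step most in need of care is the finite-sum identity, in particular checking that the single partition $\pi$ simultaneously reproduces each $V_{S + t_j,\pi}$ after the reindexing and that $S \conv T$ is covered by the stopping machinery built for finite quantum stop times; the subsequent passage to the limit is a routine interchange of two strong-operator limits with a finite sum, and raises no analytic difficulty beyond what the cited theorems already provide.
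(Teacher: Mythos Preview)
Your argument is correct. It parallels the paper's proof in its overall shape---both start from the decomposition of $(S \conv T)$ given by Lemma~\ref{lem:discprod} and pass through the partition approximants $V_{S \conv T,\pi}$---but the two diverge in how the cocycle identity enters. The paper applies the \emph{deterministic} cocycle relation $V_{s+t} = \wh{V}_s\,\wt{\sigma}_s(V_t)$ at the partition level, writing $V_{\pi_k} = V_{\pi^j_k + t_j} = \wh{V}_{\pi^j_k}\,\wt{\sigma}_{\pi^j_k}(V_{t_j})$ for a shifted partition $\pi^j$ of $[0,t-t_j]$, and then needs an adaptedness argument to commute $S\bigl((\pi^j_{k-1},\pi^j_k]\bigr)$ past $\wt{\sigma}_{\pi^j_l}(V_{t_j})$ before taking limits. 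You instead recognise the inner sum directly as $V_{S+t_j,\pi}$, pass to the limit to obtain $V_{S \conv T} = \sum_j V_{S+t_j}\,\wt{\sigma}_S\bigl(T(\{t_j\})\bigr)$, and only then invoke the already-established \emph{stopped} identity $V_{S+t_j} = \wh{V}_S\,\wt{\sigma}_S(V_{t_j})$. Your route is more modular and sidesteps the commutation step entirely; the paper's route is more self-contained at the level of Riemann-type sums and makes the role of adaptedness explicit.
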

\begin{proof}
If $T$ is as in the statement of Lemma~\ref{lem:discprod} and
$t > t_m$ then
\begin{align*}
V_{S \conv T, t} & = \stlim_\pi \sum_{k = 1}^{n + 1} V_{\pi_k} \, %
( S \conv T )\bigl( ( \pi_{k - 1}, \pi_k ] \bigr) \\[1ex]
 & = \stlim_\pi \sum_{j = 1}^m \sum_{k = 1}^{n + 1} V_{\pi_k} \, %
S\bigl( ( \pi_{k - 1} - t_j, \pi_k - t_j ]_+ \bigr) %
\wt{\sigma}_S( T\bigl( \{ t_j \} \bigr) ),
\end{align*}
where $\pi = \{ 0 = \pi_0 < \cdots < \pi_{n + 1} = t \}$ and
$( x, y ]_+ = \{ s \in \R_+ : x < s \le y \}$. For $j = 1$, \ldots,
$m$ and $k = 0$, \ldots, $n + 1$, let
\[
\pi^j_k = \left\{ \begin{array}{ll}
 \pi_k - t_j & \mbox{if } \pi_k \ge t_j, \\[1ex]
 0 & \mbox{otherwise},
\end{array}\right.
\]
so that $\pi^j$ is a partition of $[ 0, t - t_j ]$. Then
\begin{align*}
\sum_{j = 1}^m \sum_{k = 1}^{n + 1} V_{\pi_k} \, & %
S\bigl( ( \pi_{k - 1} - t_j, \pi_k - t_j ]_+ \bigr) %
\wt{\sigma}_S( T\bigl( \{ t_j \} \bigr) ) \\[1ex]
 & = \sum_{j = 1}^m \sum_{k = 1}^{n + 1} V_{\pi^j_k + t_j} %
S\bigl( ( \pi^j_{k - 1}, \pi^j_k ] \bigr) %
\wt{\sigma}_S( T\bigl( \{ t_j \} \bigr) ) \\[1ex]
 & = \sum_{j = 1}^m \sum_{k = 1}^{n + 1} \wh{V}_{\pi^j_k} \, %
S\bigl( ( \pi^j_{k - 1}, \pi^j_k ] \bigr) %
\sum_{l = 1}^{n + 1} S\bigl( ( \pi^j_{l - 1}, \pi^j_l ] \bigr) %
\wt{\sigma}_{\pi^j_l}( V_{t_j} ) %
\wt{\sigma}_S( T\bigl( \{ t_j \} \bigr) ) \\[1ex]
 & \to \sum_{j = 1}^m \wh{V}_{S, t - t_j} \wt{\sigma}_S( V_{t_j} ) %
\wt{\sigma}_S\bigl( T( \{ t_j \} ) \bigr)
\end{align*}
in the strong operator topology as $\pi$ is refined; for the final
identity, note that
\[
V_{s + t} S\bigl( ( r, s ] \bigr) = %
\wh{V}_s \, \wt{\sigma}_s( V_t ) S\bigl( ( r, s ] \bigr) = %
\wh{V}_s \, S\bigl( ( r, s ] \bigr) \wt{\sigma}_s( V_t ) %
\qquad \text{whenever} \quad 0 \le r < s < t < \infty.
\]
Hence
\[
V_{S \conv T} = \stlim_{t \to \infty} V_{S \conv T, t} = %
\stlim_{t \to \infty} \sum_{j = 1}^m \wh{V}_{S, t - t_j} \, %
\wt{\sigma}_S( V_{t_j} T\bigl( \{ t_j \} \bigr) ) = %
\wh{V}_S \, \wt{\sigma}_S( V_T ).
\qedhere
\]
\end{proof}

\begin{definition}[{\Cf~\cite[p.322]{PaS87}}]\label{dfn:qstconv}
A sequence of finite quantum stop times $( S_n )_{n \ge 1}$ is said to
\emph{converge} to a quantum stop time $S$, written $S_n \qsto S$, if
$S_n\bigl( [ 0, t ] \bigr) \to S\bigl( [ 0, t ] \bigr)$ in the strong
operator topology for all but a countable set of points $t \in \R_+$.
\end{definition}

\begin{lemma}\label{lem:cocyclecty}
Let $V$ be a strongly continuous isometric $p$-adapted cocycle. If
$( S_n )_{n \ge 1}$ is a sequence of finite quantum stop times such
that $S_n \qsto S$ for some finite quantum stop time $S$ then
$V_{S_n} \to V_S$ in the strong operator topology.
\end{lemma}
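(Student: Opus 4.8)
The plan is to exploit the construction $V_S=\stlim_{t\to\infty}\bigl(\stlim_\pi V_{S,\pi}\bigr)$ from the previous theorem, in which every operator that appears is a contraction, and to run an $\eps$-argument in which a truncation point $t$ and a partition $\pi$ are fixed before $n$ is sent to infinity. Throughout, fix $z\in\ini\otimes\fock$ and $\eps>0$, and let $N_0\subseteq\R_+$ be the countable set, provided by $S_n\qsto S$, outside which $S_n\bigl([0,s]\bigr)\to S\bigl([0,s]\bigr)$ in the strong operator topology.

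The elementary ingredient is that stopping against a \emph{fixed} partition depends continuously on the stop time. Let $\pi=\{0=\pi_0<\cdots<\pi_{m+1}=t\}$ be a finite partition with $\pi_k\notin N_0$ for $k\ge1$. Then $S_n\bigl((\pi_{k-1},\pi_k]\bigr)=S_n\bigl([0,\pi_k]\bigr)-S_n\bigl([0,\pi_{k-1}]\bigr)\to S\bigl((\pi_{k-1},\pi_k]\bigr)$ strongly as $n\to\infty$ for each $k$ (using $S_n(\{0\})=0=S(\{0\})$ when $k=1$); since $V_{S_n,\pi}=\sum_{k=1}^{m+1}V_{\pi_k}\,S_n\bigl((\pi_{k-1},\pi_k]\bigr)$ is a finite sum in which each $V_{\pi_k}$ is a fixed bounded operator, it follows that $V_{S_n,\pi}z\to V_{S,\pi}z$ as $n\to\infty$.

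The $\eps$-argument then runs as follows. Choose $t\notin N_0$ so large that $\|(V_{S,t}-V_S)z\|<\eps$; then, using that the convergence $V_{S,\pi}\to V_{S,t}$ is governed by the mesh of $\pi$, fix a partition $\pi$ of $[0,t]$ with positive division points outside $N_0$, fine enough both that $\|(V_{S,\pi}-V_{S,t})z\|<\eps$ and that the $n$-uniform mesh bound of the next paragraph is $<\eps$; finally, by the previous paragraph, choose $N$ with $\|(V_{S_n,\pi}-V_{S,\pi})z\|<\eps$ for all $n\ge N$. Then
\begin{multline*}
\|(V_{S_n}-V_S)z\|\le\|(V_{S_n}-V_{S_n,t})z\|+\|(V_{S_n,t}-V_{S_n,\pi})z\| \\
{}+\|(V_{S_n,\pi}-V_{S,\pi})z\|+\|(V_{S,\pi}-V_{S,t})z\|+\|(V_{S,t}-V_S)z\|,
\end{multline*}
and the whole matter reduces to bounding the first two terms on the right, uniformly in $n$.

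This uniformity is the main obstacle, and settling it requires looking inside the proofs of \cite[Theorems~6.5 and~7.2]{BeS14}: one needs that the rates there---of $V_{S,\pi}$ to $V_{S,t}$, and of $V_{S,t}$ to $V_S$---involve the stop time only through operator norms of the $S_n(A)$, which are at most $1$, and through tail quantities of the form $\|(I-S_n([0,t]))z\|$, the remaining dependence being on the modulus of strong continuity of $s\mapsto V_s$ and the contractivity of the $\wh{V}_s$, none of which involves $n$. Granting this and reducing, by density, to vectors $z=u\otimes\evec{f}$ with $f$ of compact support, the mesh error $\|(V_{S_n,t}-V_{S_n,\pi})z\|$ is to be bounded---exactly as the supremum over a compact parameter set in the proof of Theorem~\ref{thm:wtsconv}---by a quantity, determined by the strong continuity of $s\mapsto V_s$ and of $r\mapsto\wt{\sigma}_r$, that tends to $0$ with the mesh of $\pi$ uniformly in $n$; while the tail error $\|(V_{S_n}-V_{S_n,t})z\|$ is dominated by a quantity controlled by $\|(I-S_n([0,t]))z\|$, which, since
\[
\|(I-S_n([0,t]))z\|\le\|(I-S([0,t]))z\|+\|(S([0,t])-S_n([0,t]))z\|,
\]
is $<2\eps$ for all large $n$---the first summand is $<\eps$ by the choice of $t$, and the second is $<\eps$ for large $n$ because $t\notin N_0$ (enlarging $N$ if needed). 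Combining the five estimates gives $\|(V_{S_n}-V_S)z\|<6\eps$ for $n$ large, and $\eps>0$ and $z$ were arbitrary.
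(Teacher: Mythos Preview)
Your proposal is correct and follows essentially the same strategy as the paper: a triangle-inequality split through $V_{S_n,\pi}$ and $V_{S,\pi}$, with the tail error controlled by $\|S_n((t,\infty))z\|$ (via the estimate from \cite[Corollary~6.6]{BeS14}) and the mesh error controlled uniformly in $n$ by the strong-continuity bound from \cite[Theorem~6.5]{BeS14}. The paper packages this into three terms rather than five by combining the tail and mesh bounds into a single estimate for $\|(V_S-V_{S,\pi})u\evec{f}\|$ before splitting, but the content is the same; your passing mention of the continuity of $r\mapsto\wt{\sigma}_r$ is not actually needed for the mesh bound, which depends only on the strong continuity of $s\mapsto V_s$.
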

\begin{proof}
The usual approximation argument shows it suffices to prove that
 $\| ( V_{S_n} - V_S ) u\evec{f} \| \to 0$ as $n \to \infty$, where
$u \in \ini$ and $f \in \elltwo$ are arbitrary.

From the proof of \cite[Corollary~6.6]{BeS14}, if $S$ is any finite
quantum stop time and $s$, $t \in \R_+$ are such that $s \le t$ then
\[
\| ( V_{S, t} - V_{S, s} ) u \evec{f} \| \le %
\| S\bigl( ( s, t ] \bigr) u \evec{f} \|.
\]
Letting $t \to \infty$ and recalling that
$S\bigl( \{ \infty \} \bigr) = 0$, it follows that
\[
\| ( V_S - V_{S, s} ) u \evec{f} \| \le %
\| S\bigl( ( s, \infty ) \bigr) u \evec{f} \|.
\]
Furthermore, from the proof of \cite[Theorem~6.5]{BeS14},
\[
\| ( V_{S, \pi'} - V_{S, \pi} ) u \evec{f} \| \le %
\sup\{ \| ( V_r - V_{\pi_j} ) u \evec{f} \| : %
r \in [ \pi_j, \pi_{j + 1} ], \ j = 0, \ldots, m \} \, %
\| S\bigl( [ 0, s ] \bigr) \evec{f} \|,
\]
where $\pi'$ is any refinement of the partition
$\pi = \{ 0 = \pi_0 < \cdots < \pi_{m + 1} = s \}$; refining $\pi'$
shows that the same inequality holds with $V_{S, \pi'}$ replaced
by~$V_{S, s}$. Hence
\begin{multline*}
\| ( V_S - V_{S, \pi} ) u \evec{f} \|  \\[1ex]
\le \sup\{ \| ( V_r - V_{\pi_j} ) u \evec{f} \| : %
r \in [ \pi_j, \pi_{j + 1} ], \ j = 0, \ldots, m \} \, %
\| \evec{f} \| + \| S\bigl( ( s, \infty ) \bigr) u \evec{f} \|.
\end{multline*}
Now fix $\eps > 0$, choose $s \in \R_+$ such that
$S_n\bigl( [ 0, s ] \bigr) \to S\bigl( [ 0, s ] \bigr)$ in the strong
operator topology and
$\| S\bigl( ( s, \infty ) \bigr) u \evec{f} \| < \eps$, and
note that $\| S_n\bigl( ( s, \infty ) \bigr) u \evec{f} \| < \eps$
for all sufficiently large $n$. Therefore
\begin{align*}
\| ( V_S - V_{S_n} ) u \evec{f} \| & \le %
\| ( V_S - V_{S, \pi} ) u \evec{f} \| + %
\| ( V_{S_n} - V_{S_n, \pi} ) u \evec{f} \| + %
\| ( V_{S, \pi} - V_{S_n, \pi} ) u \evec{f} \| \\[1ex]
 & < %
2 \sup\{ \| ( V_r - V_{\pi_j} ) u \evec{f} \| : %
r \in [ \pi_j, \pi_{j + 1} ], \ j = 0, \ldots, m \} \, %
\| \evec{f} \| \\
 & \qquad + 2 \eps + %
\| ( V_{S, \pi} - V_{S_n, \pi} ) u \evec{f} \| \\[1ex]
 & < 4 \eps + \| ( V_{S, \pi} - V_{S_n, \pi} ) u \evec{f} \|
\end{align*}
as long as $\pi$ is chosen to be sufficiently fine, so that
\[
\sup\{ \| ( V_r - V_{\pi_j} ) u \evec{f} \| : %
r \in [ \pi_j, \pi_{j + 1} ], \ j = 0, \ldots, m \} \, %
\| \evec{f} \|  < \eps.
\]
Finally, if $\pi$ is chosen so
$S_n\bigl( [ 0, \pi_j ] \bigr) \to S\bigl( [ 0, \pi_j ] \bigr)$
in the strong operator topology as~$n \to \infty$, for~$j = 0$,
\ldots, $m + 1$, then, since
\[
\| ( V_{S, \pi} - V_{S_n, \pi} ) u \evec{f} \| \le %
\sum_{j = 1}^{m + 1} \| V_{\pi_j} \, %
\bigl( S( \pi_{j - 1}, \pi_j] ) - S_n( \pi_{j - 1}, \pi_j ] \bigr) %
u \evec{f} \| \to 0
\]
as $n \to \infty$, and $\eps$ is arbitrary, the result follows.
\end{proof}

\begin{lemma}\label{lem:discapprox}
Let $T$ be a finite quantum stop time, and suppose that the
multiplicity space $\mul$ is separable. There exists a sequence of
discrete quantum stop times $( T_n )_{n \ge 1}$ such that
$T_n \qsto T$. Furthermore, $S \conv T_n \qsto S \conv T$ for any
finite quantum stop time $S$.
\end{lemma}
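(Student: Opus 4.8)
The plan is to realise each $T_n$ as a push-forward of $T$ along a dyadic rounding map and then to obtain both required convergences from the strong-operator continuity of a projection-valued measure along an increasing sequence of Borel sets. For $n \ge 1$, let $\phi_n : \R_+ \to \R_+$ round its argument \emph{up} to the nearest integer multiple of $2^{-n}$ and then truncate the result at $n$, so that $\phi_n$ is a Borel step function taking finitely many values, all lying in $\{ 0 \} \cup ( 0, \infty )$, with $\phi_n^{-1}( \{ 0 \} ) = \{ 0 \}$ and $\phi_n^{-1}( \{ n \} ) = ( n - 2^{-n}, \infty )$. Put $T_n := T \comp \phi_n^{-1}$, so that $T_n( B ) = T( \phi_n^{-1}( B ) )$. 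Checking that $T_n$ is a finite quantum stop time is routine: property~(i) holds since a push-forward of a complex measure is a complex measure; (ii) since $T_n( [ 0, \infty ] ) = T( \R_+ ) = I$ and $T_n( \{ \infty \} ) = 0$; and (iii) since $T_n( \{ 0 \} ) = T( \{ 0 \} ) = 0$, while $\phi_n^{-1}( [ 0, s ] ) = [ 0, \lfloor 2^n s \rfloor 2^{-n} ]$ for $0 \le s < n$ gives $T_n( [ 0, s ] ) = T( [ 0, \lfloor 2^n s \rfloor 2^{-n} ] )$, which inherits the identity adaptedness of $T$ (and $T_n( [ 0, s ] ) = I$ for $s \ge n$). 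Finally, $T_n$ is supported on a finite subset of $( 0, \infty )$, of total mass $I$, so it is discrete. Rounding \emph{up} rather than down is forced here: rounding down would put mass near $0$, and into intervals $[ 0, s )$ with $s$ non-dyadic, contradicting~(iii).

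To see that $T_n \qsto T$, note that $\fock$ is separable because $\mul$ is, so the atom set $N_T := \{ t \in \R_+ : T( \{ t \} ) \ne 0 \}$ is countable. For $t \notin N_T$ and $n > t$ the formula above gives $T_n( [ 0, t ] ) = T( [ 0, \lfloor 2^n t \rfloor 2^{-n} ] )$, and since $\lfloor 2^n t \rfloor 2^{-n} \uparrow t$ as $n \to \infty$ and $T( \{ t \} ) = 0$, continuity of $T$ along increasing sets yields $T_n( [ 0, t ] ) \to T( [ 0, t ] )$ in the strong operator topology. Convergence off the countable set $N_T$ is exactly $T_n \qsto T$.

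For the convolutions, the crucial observation is that $S \otimes T_n$ is the push-forward of $S \otimes T$ under the map $( x, y ) \mapsto ( x, \phi_n( y ) )$: two applications of Lemma~\ref{lem:product}, together with $T_n = T \comp \phi_n^{-1}$, give $( S \otimes T_n )( A \times B ) = S( A ) \sigma_S( T( \phi_n^{-1}( B ) ) ) = ( S \otimes T )( A \times \phi_n^{-1}( B ) )$ for all measurable rectangles $A \times B$; as these form a generating $\pi$-system for $\borel( \R_+ \times \R_+ )$, and both sides are projection-valued measures of total mass~$I$, the identity extends to all Borel sets. Consequently, writing $C^n_t := \{ ( x, y ) \in \R_+^2 : x + \phi_n( y ) \le t \}$ and $\Delta_t := \{ ( x, y ) \in \R_+^2 : x + y \le t \}$, one has $( S \conv T_n )( [ 0, t ] ) = ( S \otimes T_n )( \Delta_t ) = ( S \otimes T )( C^n_t )$, whereas $( S \conv T )( [ 0, t ] ) = ( S \otimes T )( \Delta_t )$. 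Since $\phi_n( y ) \downarrow y$ as $n \to \infty$ and, for $n > t$, $\phi_n( y ) \ge y$ at every point of $C^n_t$, the sets $( C^n_t )_{n > t}$ increase and satisfy $\{ ( x, y ) \in \R_+^2 : x + y < t \} \subseteq \bigcup_{n > t} C^n_t \subseteq \Delta_t$.

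To finish, $\fock$ being separable, the set $N := \{ t \in \R_+ : ( S \conv T )( \{ t \} ) \ne 0 \}$ is countable; for $t \notin N$ the inner and outer sets in the last inclusion have the same $( S \otimes T )$-measure, as their difference has measure $( S \otimes T )( \{ x + y = t \} ) = ( S \conv T )( \{ t \} ) = 0$, and hence continuity of $S \otimes T$ along the increasing sequence $( C^n_t )_{n > t}$ forces $( S \otimes T )( C^n_t ) \to ( S \otimes T )( \Delta_t ) = ( S \conv T )( [ 0, t ] )$ in the strong operator topology. As this holds off the countable set $N$, it says $S \conv T_n \qsto S \conv T$. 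I do not anticipate a serious obstacle: the argument reduces, via the push-forward identity, to the continuity of projection-valued measures, and the only points genuinely needing care are the bookkeeping forced by the truncation at~$n$ (hence the restriction to $n > t$ throughout) and the side from which $\phi_n$ approaches the identity, which is precisely what makes the countable exceptional sets $N_T$ and $N$, and therefore the separability hypothesis on $\mul$, unavoidable.
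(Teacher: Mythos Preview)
Your argument is correct. The construction of the discretisations $T_n$ as push-forwards of $T$ under a dyadic rounding-up map is essentially the same as the paper's (which uses general partitions $\pi^n$ with vanishing mesh and diverging right endpoint), and both establish $T_n \qsto T$ by the same mechanism: the discretisation of $[0,t]$ increases to $[0,t)$, and the missing atom $T(\{t\})$ vanishes off a countable set.

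Where your approach genuinely diverges from the paper's is the convolution claim. The paper invokes an integral representation from \cite[Corollary~3.5]{BeS14} to write $\|((S\conv T_n)([0,t]) - (S\conv T)([0,t]))\evec{f}\|^2$ explicitly as an integral against the scalar measure $\|S(\rd s)\evec{f}\|^2$, and then applies dominated convergence; the limit is seen to vanish off $\disc_S + \disc_T$. You instead observe, via Lemma~\ref{lem:product} and a $\pi$-system argument, that $S\otimes T_n$ is the push-forward of the single spectral measure $S\otimes T$ under $(x,y)\mapsto(x,\phi_n(y))$, so that $(S\conv T_n)([0,t]) = (S\otimes T)(C^n_t)$ for an increasing family of sets $C^n_t$ sandwiched between the open and closed triangles; monotone continuity of a spectral measure then finishes the job, with exceptional set the atoms of $S\conv T$. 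Your route is more self-contained (no appeal to the Fock-space integral formula of \cite{BeS14}) and arguably more conceptual, reducing everything to abstract spectral-measure continuity; the paper's route is more computational but yields the explicit description $\disc_S + \disc_T$ of the exceptional set, which is a little sharper than the atom set of $S\conv T$ though both are countable and either suffices for $\qsto$.
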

\begin{proof}
As is well known, a spectral measure is strongly right continuous with
left limits: if~$x \in \fock$ then
\[
\lim_{s \to t+} S\bigl( [ 0, t ] \bigr) x - %
S\bigl( [ 0, s ] \bigr) x = %
-\lim_{s \to t+} S\bigl( ( t, s ] \bigr) x = 0,
\]
whereas
\[
\lim_{s \to t-} S\bigl( [ 0, t ] \bigr) x - %
S\bigl( [ 0, s ] \bigr) x = %
\lim_{s \to t-} S\bigl( ( s, t ] \bigr) x = S\bigl( \{ t \} \bigr) x.
\]
In particular, the set of discontinuities
$\disc_S( x ) := \{ t \in \R_+ : S\bigl( \{ t \} \bigr) x \neq 0 \}$
is countable.

Now suppose $\{ x_n : n \ge 1 \}$ is dense in $\fock$ and let
$\disc_S := \cup_{n \ge 1} \disc_S( x_n )$. An $\eps / 3$ argument
shows that~$t \mapsto S\bigl( [ 0, t ] \bigr) x$ is continuous on
$\R_+ \setminus \disc$ for all $x \in \fock$, so $\disc_S$ is the set
of discontinuities of~$S$ on~$\R_+$.

For all $n \ge 1$, let the finite partition
$\pi^n = \{ 0 = \pi^n_0 < \pi^n_1 < \cdots < \pi^n_n < \infty \}$
be such that~$\pi^n_n \to \infty$ and 
$\max\{ \pi^n_k - \pi^n_{k - 1} : k = 1, \ldots, n \} \to 0$
as $n \to \infty$. Define a discrete quantum stop time
\[
T_n : \borel[ 0, \infty ] \to \bop{\fock}; \ %
A \mapsto \sum_{k = 1}^{n - 1} %
1_A( \pi^n_k ) T\bigl( ( \pi^n_{k - 1}, \pi^n_k ] \bigr) + %
1_A( \pi^n_n ) T\bigl( ( \pi^n_n, \infty ] \bigr),
\]
and note that
\[
T_n\bigl( [ 0, t ] \bigr) = \left\{ \begin{array}{ll}
 0 & \text{if } 0 \le t < \pi^n_1, \\[1ex]
 T\bigl( [ 0, \pi^n_1 ] \bigr) & %
\text{if } \pi^n_1 \le t < \pi^n_2, \\[1ex]
 \vdots & \vdots \\[1ex]
 T\bigl( [ 0, \pi^n_{n - 1} ] \bigr) & %
\text{if } \pi^n_{n - 1} \le t < \pi^n_n, \\[1ex]
 I & \text{if } \pi^n_n \le t \le \infty.
\end{array}\right.
\]
Thus if $x \in \fock$ and $t \in \R_+$ then
$\pi^n_k > t \ge \pi^n_{k - 1}$ for some $k \in \{ 1, \ldots, n \}$
once $n$ is sufficiently large, so
\[
T\bigl( [ 0, t ] \bigr) x - T_n\bigl( [ 0, t ] \bigr) x = %
T\bigl( ( \pi^n_{k - 1}, t ] \bigr) x \to T\bigl( \{ t \} \bigr) x
\]
as $n \to \infty$. This last term equals $0$ if
$t \in \R_+ \setminus \disc_T$, and thus $T_n \qsto T$.

For the final claim, let $S$ be a finite quantum stop time. If
$t \in \R_+$ and $f \in \elltwo$ then, by \cite[Corollary~3.5]{BeS14},
\begin{align*}
I_n( t ) & := %
\| \bigl( ( S \conv T_n )\bigl( [ 0, t ] \bigr) - %
( S \conv T )\bigl( [ 0, t ] \bigr) \bigr) \evec{f} \|^2 \\[1ex]
 & \hphantom{:}= \int_{[ 0, t ]} \| %
( T - T_n )\bigl( [ 0, t - s ] \bigr) %
\Gamma_s^* \evec{f} \|^2 %
\exp\Bigl( -\int_s^\infty \| f( u ) \|^2 \std u \Bigr) %
\| S( \rd s ) \evec{f} \|^2.
\end{align*}
To prove that $S \conv T_n \qsto S \conv T$, it suffices to show that
$I_n( t ) \to 0$ as $n \to \infty$ for all but countably
many~$t \in \R_+$, by the usual approximation argument.

Now, as $n \to \infty$, so
$( T - T_n )\bigl( [ 0, t - s ] \bigr) \to %
T\bigl( \{ t - s \} \bigr)$,
by the previous working. Thus the dominated convergence theorem gives
that
\[
I_n( t ) \to I( t ) := \sum_{r \in \disc \cap [ 0, t ] } %
\| T\bigl( \{ r \} \bigr) \Gamma_{t - r}^* \evec{f} \|^2 %
\exp\Bigl( -\int_{t - r}^\infty \| f( u ) \|^2 \std u \Bigr) %
\| S\bigl( \{ t - r \} \bigr) \evec{f} \|^2. 
\]
Thus $I( t ) = 0$ whenever
$t \not \in \disc_S + \disc_T := %
\{ s + r : s \in \disc_S, \ r \in \disc_T \}$
and the result follows.
\end{proof}

\begin{remark}
If the multiplicity space $\mul$ is not separable, the statement of
\cite[Corollary~3.5]{BeS14} requires strong measurability, not just
Borel measurability, of $F$ and $G$. As $t \mapsto \Gamma_t$ and
$t \mapsto \Gamma_t^*$ are strongly continuous and
$t \mapsto S\bigl( [ 0, t ] \bigr)$ is strongly right continuous on
$\R_+$, all the subsequent proofs in \cite{BeS14} remain valid.
\end{remark}

\begin{remark}
It is straightforward to construct on a non-separable Hilbert space a
spectral measure which has an uncountable set of discontinuities. Thus
the separability hypothesis in Lemma~\ref{lem:discapprox} may not be
dropped.
\end{remark}

\begin{theorem}\label{thm:stoppedcocycle}
Let $V$ be a strongly continuous isometric $p$-adapted cocycle, and
suppose that the multiplicity space $\mul$ is separable. If $S$
and $T$ are finite quantum stop times then
\[
V_{S \conv T} = \wh{V}_S \, \wt{\sigma}_S( V_T ).
\]
\end{theorem}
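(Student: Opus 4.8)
The plan is to reduce the statement to the discrete case of Lemma~\ref{lem:discrete} by a double approximation, and it is precisely here that the separability of $\mul$ is used. First I would apply Lemma~\ref{lem:discapprox} to obtain a sequence of discrete quantum stop times $( T_n )_{n \ge 1}$ with $T_n \qsto T$ and, crucially, with $S \conv T_n \qsto S \conv T$ as well. Because each $T_n$ is discrete, Lemma~\ref{lem:discrete} applies directly and yields
\[
V_{S \conv T_n} = \wh{V}_S \, \wt{\sigma}_S( V_{T_n} ) \qquad \text{for every } n \ge 1,
\]
so that the whole problem is reduced to passing to the limit $n \to \infty$ on both sides of this identity.

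For the left-hand side I would use Lemma~\ref{lem:cocyclecty}: the convolutions $S \conv T_n$ and $S \conv T$ are themselves finite quantum stop times (for instance $( S \conv T )( \R_+ ) = ( S \otimes T )( \R_+ \times \R_+ ) = S( \R_+ ) \sigma_S( T( \R_+ ) ) = I$ by Lemma~\ref{lem:product}, while identity adaptedness follows from Remark~\ref{rem:prodst}(ii)), so from $S \conv T_n \qsto S \conv T$ that lemma gives $V_{S \conv T_n} \to V_{S \conv T}$ in the strong operator topology. For the right-hand side, the same lemma applied to $T_n \qsto T$ gives $V_{T_n} \to V_T$ strongly; since every $V_{T_n}$ is a contraction, Remark~\ref{rem:socts} upgrades this to $\wt{\sigma}_S( V_{T_n} ) \to \wt{\sigma}_S( V_T )$ strongly, and left multiplication by the contraction $\wh{V}_S$ preserves strong convergence, whence $\wh{V}_S \, \wt{\sigma}_S( V_{T_n} ) \to \wh{V}_S \, \wt{\sigma}_S( V_T )$. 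Equating the two limits delivers the theorem.

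I expect no genuine obstacle: the substance of the argument has already been isolated in Lemmas~\ref{lem:discrete}, \ref{lem:cocyclecty} and~\ref{lem:discapprox}, and what remains is assembly. The one point deserving care is that $\wt{\sigma}_S$ is strong-operator continuous only on \emph{bounded} subsets of $\bop{\ini \otimes \fock}$ (Remark~\ref{rem:socts}), so one must note explicitly that the approximants $\{ V_{T_n} : n \ge 1 \}$ are uniformly bounded --- which they are, being contractions by the construction of the stopped cocycle --- before invoking that continuity. A secondary bookkeeping point is to confirm that convolution maps pairs of finite quantum stop times to finite quantum stop times, so that Lemma~\ref{lem:cocyclecty} is legitimately applicable to the sequence $( S \conv T_n )_{n \ge 1}$.
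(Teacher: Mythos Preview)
Your proposal is correct and follows essentially the same route as the paper's own proof: apply Lemma~\ref{lem:discapprox} to get discrete approximants $T_n$ with $T_n \qsto T$ and $S \conv T_n \qsto S \conv T$, invoke Lemma~\ref{lem:discrete} at each stage, and pass to the limit on both sides using Lemma~\ref{lem:cocyclecty} together with the bounded strong-operator continuity of $\wt{\sigma}_S$ from Remark~\ref{rem:socts}. Your added remarks about the $V_{T_n}$ being contractions and about $S \conv T_n$, $S \conv T$ being finite quantum stop times are exactly the bookkeeping the paper leaves implicit.
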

\begin{proof}
By Lemma~\ref{lem:discapprox}, there exists a sequence of discrete
quantum stop times $( T_n )_{n \ge 1}$ such that $T_n \qsto T$ and
$S \conv T_n \qsto S \conv T$. Hence
$V_{S \conv T_n} \to V_{S \conv T}$ in the strong operator topology,
by Lemma~\ref{lem:cocyclecty}. Furthermore,
$V_{S \conv T_n} = V_S \, \wt{\sigma}_S( V_{T_n} )$ for all $n \ge 1$,
by Lemma~\ref{lem:discrete}, so the result follows from another
application of Lemma~\ref{lem:cocyclecty} together with
Remark~\ref{rem:socts}, that $\sigma_S$ is strong operator continuous
on bounded sets.
\end{proof}

The next two theorems show that stopping an isometric cocycle can be
used to produce a form of inner non-unital Evans--Hudson flow.

\begin{theorem}\label{thm:ehid}
Let $V$ be a strongly continuous isometric identity-adapted cocycle.
The map
\[
\hj_S : \bop{\ini} \to \bop{\ini \otimes \fock}; \ %
a \mapsto V_S ( a \otimes I ) V_S^*
\]
is a $*$-homomorphism for any finite quantum stop time $S$.
Furthermore, if the multiplicity space~$\mul$ is separable, the
identity
\begin{equation}\label{eqn:cocycleid}
\hj_{S \star T} = \wh{\hj}_S \comp \wt{\sigma}_S \comp \hj_T
\end{equation}
holds for any finite quantum stop times $S$ and $T$, where
\[
\wh{\hj}_S : %
\bop{\ini \otimes \fock} \to \bop{\ini \otimes \fock}; \ %
X \mapsto V_S X V_S^*.
\]
\end{theorem}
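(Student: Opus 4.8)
The plan is to establish the two assertions in turn: that $\hj_S$ is a $*$-homomorphism (for an arbitrary finite quantum stop time $S$), and that the mapping-cocycle identity~\eqref{eqn:cocycleid} holds under the separability hypothesis. The first reduces to the observation that the stopped cocycle $V_S$ is an \emph{isometry}, not merely a contraction, when $V$ is identity adapted; the second is then a short manipulation of Theorem~\ref{thm:stoppedcocycle}.

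For the first part, I would first record that $V_S^* V_S = I$. Since $V$ is identity adapted one has $\wh{V} = V$ as a family of operators, so in particular each $V_t$ is an isometry. For a finite partition $\pi = \{ 0 = \pi_0 < \cdots < \pi_{n+1} = t \}$ of $[ 0, t ]$, the cocycle identity gives $V_{\pi_j}^* V_{\pi_k} = \wt{\sigma}_{\pi_j}( V_{\pi_k - \pi_j} )$ when $j \le k$, and its adjoint when $j \ge k$; the adaptedness of $S$ and of $\wt{\sigma}$ shows that $S\bigl( ( \pi_{j-1}, \pi_j ] \bigr)$ commutes with $\wt{\sigma}_{\pi_j}( V_{\pi_k - \pi_j} )$, exactly as in the commutation identity used at the end of the proof of Lemma~\ref{lem:discrete}, so each off-diagonal term of $V_{S,\pi}^* V_{S,\pi}$ reduces to a product $S\bigl( ( \pi_{j-1}, \pi_j ] \bigr) S\bigl( ( \pi_{k-1}, \pi_k ] \bigr) = 0$ by disjointness of the partition intervals, leaving $V_{S,\pi}^* V_{S,\pi} = S\bigl( [ 0, t ] \bigr)$. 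Taking the strong limit as $\pi$ is refined gives $V_{S,t}^* V_{S,t} = S\bigl( [ 0, t ] \bigr)$, and letting $t \to \infty$ and using $S\bigl( [ 0, \infty ) \bigr) = I$ gives $V_S^* V_S = I$ (alternatively, this isometry property is implicit in \cite[Section~6]{BeS14} and could simply be quoted). With $V_S^* V_S = I$ in hand, $\hj_S( a )^* = V_S ( a^* \otimes I ) V_S^* = \hj_S( a^* )$ is immediate, and $\hj_S( a ) \hj_S( b ) = V_S ( a \otimes I )( V_S^* V_S )( b \otimes I ) V_S^* = V_S ( ab \otimes I ) V_S^* = \hj_S( ab )$, so $\hj_S$ is a $*$-homomorphism.

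For the second part, assume $\mul$ is separable, so that Theorem~\ref{thm:stoppedcocycle} is available, and fix $a \in \bop{\ini}$. Since $\wh{V} = V$ for identity-adapted $V$, Theorem~\ref{thm:stoppedcocycle} reads $V_{S \conv T} = V_S \, \wt{\sigma}_S( V_T )$, so $\hj_{S \conv T}( a ) = V_S \, \wt{\sigma}_S( V_T )( a \otimes I ) \wt{\sigma}_S( V_T )^* V_S^*$. On the other side, $\wt{\sigma}_S$ is a unital $*$-homomorphism by Theorem~\ref{thm:wtsconv}, and $\wt{\sigma}_S( a \otimes I ) = a \otimes \sigma_S( I ) = a \otimes I$ since $\sigma_S$ is unital, so $\wt{\sigma}_S\bigl( V_T ( a \otimes I ) V_T^* \bigr) = \wt{\sigma}_S( V_T )( a \otimes I ) \wt{\sigma}_S( V_T )^*$ and hence $( \wh{\hj}_S \comp \wt{\sigma}_S \comp \hj_T )( a ) = V_S \, \wt{\sigma}_S\bigl( V_T ( a \otimes I ) V_T^* \bigr) V_S^* = V_S \, \wt{\sigma}_S( V_T )( a \otimes I ) \wt{\sigma}_S( V_T )^* V_S^*$. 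Comparing, the two sides agree, which is~\eqref{eqn:cocycleid}.

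The only genuinely substantive step is the isometry property of $V_S$ in the first part, which is what lets $V_S^* V_S$ be cancelled inside the product defining $\hj_S( a ) \hj_S( b )$. Everything thereafter is formal: a rearrangement of Theorem~\ref{thm:stoppedcocycle} together with the facts that $\wt{\sigma}_S$ and the ampliation $a \mapsto a \otimes I$ are unital $*$-homomorphisms. So I anticipate no real difficulty beyond locating or verifying that stopping an identity-adapted isometric cocycle yields an isometry.
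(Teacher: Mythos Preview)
Your proposal is correct and follows essentially the same approach as the paper: both reduce the $*$-homomorphism claim to the isometry identity $V_S^* V_S = I_\ini \otimes I$, and both derive~\eqref{eqn:cocycleid} by expanding $V_{S \conv T}$ via Theorem~\ref{thm:stoppedcocycle} and using that $\wt{\sigma}_S$ is a unital $*$-homomorphism with $\wt{\sigma}_S( a \otimes I ) = a \otimes I$. The only difference is that the paper simply cites \cite[Proposition~6.8]{BeS14} for the isometry property, whereas you sketch a direct partition computation showing $V_{S,\pi}^* V_{S,\pi} = S\bigl( [ 0, t ] \bigr)$; your sketch is sound (the passage to the limit is justified since strong convergence of $V_{S,\pi}$ gives weak convergence of $V_{S,\pi}^* V_{S,\pi}$), and you yourself note that one could instead quote the result.
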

\begin{proof}
Note that $V_S^* V_S = I_\ini \otimes I$, by
\cite[Proposition~6.8]{BeS14}. Thus if $a$, $b \in \bop{\ini}$ then
\[
\hj_S( a ) \hj_S( b ) = %
V_S ( a \otimes I ) V_S^* V_S ( b \otimes I ) V_S^* = %
V_S ( a b \otimes I ) V_S = \hj_S( a b ),
\]
so $\hj_S$ is multiplicative. Linearity and $*$-preservation are
immediate.

For the second claim, note that $\wh{V} = V$. Hence, by
Theorem~\ref{thm:stoppedcocycle}, if $a \in \bop{\ini}$ then
\begin{align*}
\hj_{S \star T}( a ) = %
V_{S \star T} ( a \otimes I ) V_{S \star T}^* & = %
V_S \, \wt{\sigma}_S( V_T ) ( a \otimes I ) %
\wt{\sigma}_S( V_T^* ) \, V_S^* \\[1ex]
 & = V_S \, \wt{\sigma}_S( V_T ( a \otimes I ) V_T^* ) %
\, V_S^* \\[1ex]
 & = ( \wh{\hj}_S \comp \wt{\sigma}_S \comp \hj_T )( a );
\end{align*}
the penultimate equality holds because $\sigma_S$ is unital, so
$\wt{\sigma}_S( a \otimes I ) = a \otimes I$.
\end{proof}

\begin{theorem}\label{thm:ehvac}
Let $V$ be a strongly continuous isometric vacuum-adapted cocycle.
The map
\[
\hk_S : \bop{\ini} \to \bop{\ini \otimes \fock}; \ %
a \mapsto V_S ( a \otimes E_S ) V_S^*
\]
is a $*$-homomorphism for any finite quantum stop time $S$.
Furthermore, if the multiplicity space~$\mul$ is separable, the
identity
\begin{equation}\label{eqn:cocyclevac}
\hk_{S \star T} = \wh{\hk}_S \comp \wt{\sigma}_S \comp \hk_T
\end{equation}
holds for any finite quantum stop times $S$ and $T$, where
\[
\wh{\hk}_S : %
\bop{\ini \otimes \fock} \to \bop{\ini \otimes \fock}; \ %
X \mapsto \wh{V}_S X \wh{V}_S^*.
\]
\end{theorem}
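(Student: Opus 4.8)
The plan is to prove Theorem~\ref{thm:ehvac} by following the template of the proof of Theorem~\ref{thm:ehid}, but with careful attention to the bookkeeping of the projection $E_S$, which plays the role that the identity operator played in the identity-adapted case. First I would establish multiplicativity of $\hk_S$. The key algebraic input should again be a relation of the form $V_S^* V_S = I_\ini \otimes E_S$ (or perhaps $\wh{V}_S^* \wh{V}_S = I_\ini \otimes I$ together with $V_S = \wh{V}_S(I_\ini \otimes E_S)$), coming from vacuum adaptedness and the analogue of \cite[Proposition~6.8]{BeS14}; this is needed so that the middle product $V_S^* V_S$ sandwiched between $(a \otimes E_S)$ and $(b \otimes E_S)$ collapses correctly, using that $E_S (a \otimes E_S)$ interacts well because $a$ acts on $\ini$ and $E_S$ on $\fock$, and $E_S$ is a projection. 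Linearity and $*$-preservation are immediate as before.

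For the cocycle identity \eqref{eqn:cocyclevac}, I would expand $\hk_{S \conv T}(a) = V_{S \conv T}(a \otimes E_{S \conv T}) V_{S \conv T}^*$ and substitute $V_{S \conv T} = \wh{V}_S \wt{\sigma}_S(V_T)$ from Theorem~\ref{thm:stoppedcocycle} (noting that in the vacuum-adapted case $\wh{V}_S \neq V_S$, so the factor that appears is genuinely $\wh{V}_S$, matching the definition of $\wh{\hk}_S$). The main obstacle I expect is handling the projection $E_{S \conv T}$: one must show something like $E_{S \conv T} = \wt{\sigma}_S(V_T)^* \wt{\sigma}_S(V_T) \cdot (\text{something})$, or more plausibly that $a \otimes E_{S \conv T}$ can be rewritten as $\wt{\sigma}_S(V_T)^{*}\,\wt\sigma_S\bigl(V_T (a \otimes E_T) V_T^*\bigr)\,\wt\sigma_S(V_T)$ after inserting $\wh V_S$, i.e.\ tracking how $E$ transforms under convolution of stop times and under $\wt{\sigma}_S$. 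A clean way to do this: first prove the auxiliary identity $E_{S \conv T} = \wh V_S^{-1}\cdots$ is awkward, so instead prove $\wh{V}_S^* \wh{V}_S = I$ and $\wt\sigma_S(E_T)$ relates to $E_{S\conv T}$ via $E_{S\conv T} = E_S \,\wt\sigma_S(E_T)$ together with the fact that $V_S = \wh V_S (I_\ini \otimes E_S)$, hence $V_S(a\otimes E_{S\conv T})V_S^* = \wh V_S (I_\ini\otimes E_S)(a\otimes E_S\wt\sigma_S(E_T))(I_\ini\otimes E_S)\wh V_S^*$, and then push $\wt\sigma_S(E_T)$ through using that $\wt\sigma_S$ is a unital $*$-homomorphism.

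Concretely, the chain of equalities I would aim to produce is
\begin{align*}
\hk_{S \conv T}(a) &= V_{S \conv T}(a \otimes E_{S \conv T}) V_{S \conv T}^* \\
&= \wh V_S \, \wt\sigma_S(V_T)\bigl(a \otimes E_S\,\wt\sigma_S(E_T)\bigr)\wt\sigma_S(V_T^*)\,\wh V_S^* \\
&= \wh V_S \,\wt\sigma_S\bigl(V_T (a \otimes E_T) V_T^*\bigr)\wh V_S^* \\
&= (\wh{\hk}_S \comp \wt\sigma_S \comp \hk_T)(a),
\end{align*}
where the second line uses the vacuum-adapted factorisation $V_{S\conv T} = \wh V_S(I_\ini\otimes E_{S\conv T})$ and $E_{S\conv T} = E_S\,\wt\sigma_S(E_T)$ (the latter being the natural vacuum-adapted counterpart of Lemma~\ref{lem:product}, which should follow from $(S\otimes T)(\R_+\times\R_+)$ expanded against the defining isometry $j_S$, or directly from $E_{S \conv T}$ being the second quantisation associated with the stopped interval), the third line uses that $\wt\sigma_S$ is a unital $*$-homomorphism together with $\wt\sigma_S(a \otimes I) = a \otimes I$ and the adaptedness relation between $\wh V_S$ and $V_S$, and the last line is the definition. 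The genuinely new point, and the step most likely to need a short separate lemma, is the identity $E_{S\conv T} = E_S\,\wt\sigma_S(E_T)$; everything else is a transcription of the argument for Theorem~\ref{thm:ehid} with $E_S$ inserted in the appropriate slots.
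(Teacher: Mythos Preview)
Your overall plan matches the paper's proof almost exactly: use $V_S^* V_S = \wt{E}_S$ (this is \cite[Proposition~6.7]{BeS14}, not~6.8) for multiplicativity, then substitute $V_{S\conv T} = \wh V_S\,\wt\sigma_S(V_T)$ from Theorem~\ref{thm:stoppedcocycle} and rewrite the middle factor using the homomorphism property of $\wt\sigma_S$. The one genuine error is the auxiliary identity you single out as ``the new point'': the formula $E_{S\conv T} = E_S\,\sigma_S(E_T)$ is \emph{false}. Already in the deterministic case $S=s$, $T=t$ one has $\sigma_s(E_t) = I_{s)}\otimes \Gamma_s E_t\Gamma_s^* = E_{s+t}$, whereas $E_s\,\sigma_s(E_t) = E_s\,E_{s+t} = E_s$, which is the wrong projection.

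The correct identity, which the paper invokes from \cite[Theorem~5.4]{BeS14}, is simply
\[
E_{S\conv T} = \sigma_S(E_T),
\]
with no $E_S$ factor. With this in hand your chain of equalities works without any detour through the factorisation $V_{S\conv T} = \wh V_S(I_\ini\otimes E_{S\conv T})$ (which is itself not correct; the hat on the left would have to be $\wh V_{S\conv T}$): one gets $a\otimes E_{S\conv T} = a\otimes\sigma_S(E_T) = \wt\sigma_S(a\otimes E_T)$ directly, using that $\sigma_S$ is unital so $\wt\sigma_S(a\otimes I) = a\otimes I$. Then multiplicativity of $\wt\sigma_S$ gives
\[
\wh V_S\,\wt\sigma_S(V_T)\,\wt\sigma_S(a\otimes E_T)\,\wt\sigma_S(V_T^*)\,\wh V_S^*
= \wh V_S\,\wt\sigma_S\bigl(V_T(a\otimes E_T)V_T^*\bigr)\,\wh V_S^*
= (\wh\hk_S\comp\wt\sigma_S\comp\hk_T)(a),
\]
exactly as in the paper. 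So the fix is to replace your proposed lemma by the existing result $E_{S\conv T} = \sigma_S(E_T)$; everything else in your outline is correct and coincides with the published argument.
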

\begin{proof}
Note that $V_S^* V_S = \wt{E}_S$, by \cite[Proposition~6.7]{BeS14}.
Thus if $a$, $b \in \bop{\ini}$ then
\[
\hk_S( a ) \hk_S( b ) = %
V_S ( a \otimes E_S ) V_S^* V_S ( b \otimes E_S ) V_S^* = %
V_S ( a b \otimes E_S ) V_S = \hk_S( a b ),
\]
so $\hk_S$ is multiplicative. As above, linearity and $*$-preservation
are immediate.

For the second claim, let $a \in \bop{\ini}$ and note that, by
Theorem~\ref{thm:stoppedcocycle},
\begin{align*}
\hk_{S \star T}( a ) = %
V_{S \star T} ( a \otimes E_{S \star T} ) V_{S \star T}^* & = %
\wh{V}_S \, \wt{\sigma}_S( V_T ) ( a \otimes E_{S \star T} ) %
\wt{\sigma}_S( V_T^* ) \, \wh{V}_S^* \\[1ex]
 & = \wh{V}_S \, \wt{\sigma}_S( V_T ( a \otimes E_T ) V_T^* ) %
\, \wh{V}_S^* \\[1ex]
 & = ( \wh{\hk}_S \comp \wt{\sigma}_S \comp \hk_T )( a );
\end{align*}
for the penultimate equality, note that
$E_{S \star T} = \sigma_S( E_T )$, by \cite[Theorem~5.4]{BeS14}, which
implies immediately that
$\wt{\sigma}_S( a \otimes E_T ) = a \otimes E_{S \star T}$.
\end{proof}

\begin{remark}
In the context of Theorems~\ref{thm:ehid} and~\ref{thm:ehvac}, note
that $\hj_S( I_\ini ) = V_S V_S^* = \hk_S( I_\ini )$. The former
identity is immediate, and the latter holds because
$V_{S, \pi} \wt{E}_{S, \pi'} V_{S, \pi} = V_{S, \pi} V_{S, \pi}^*$ if
$V$ is vacuum adapted, where $\pi$ is any finite partition of
$[ 0, t ]$ and $\pi'$ is its one-point extension to a partition
of~$[ 0, \infty ]$.
\end{remark}

\begin{remark}
If the finite quantum stop time $S$ is deterministic, so that
$S\bigl( \{ s \} \bigr) = I$ for some $s \in ( 0 , \infty )$, then
$V_S = V_s$ and $\wt{\sigma}_S = \wt{\sigma}_s$. It follows that
(\ref{eqn:cocycleid}) and (\ref{eqn:cocyclevac}) are the stop-time
generalisation of the deterministic mapping-cocycle relation
\cite{Bra92}
\[
\hj_{s + t} = \wh{\hj}_s \comp \wt{\sigma}_s \comp \hj_t %
\qquad \text{for all } s, t \ge 0.
\]
\end{remark}

\subsection*{Acknowledgements}
This work was begun during a conference held at the Kerala School of
Mathematics, Kozhikode, India; the warm hospitality and stimulating
atmosphere provided by the organisers is gratefully acknowledged. It
continued during visits of the first author to the Indian Statistical
Institute, Kolkata, and the Jawaharlal Nehru Centre for Advanced
Scientific Research, Bangalore, with travel supported by the ANCM
project of the Indian Statistical Institute. The second author
gratefully acknowledges support by a grant from the SERB-Distinguished
Fellowship of the Department of Science and Technology, Government of
India.

\section*{References}

\end{document}